\documentclass[11pt,reqno]{amsart}
\setlength{\voffset}{-.25in}
\usepackage{amssymb,latexsym}
\usepackage{graphicx}
\usepackage{url}		%does nice formatting of URLs

\textwidth=6.175in
\textheight=9.0in
\headheight=13pt
\calclayout

\makeatletter
\newcommand{\monthyear}[1]{%
	\def\@monthyear{\uppercase{#1}}}
\newcommand{\volnumber}[1]{%
	\def\@volnumber{\uppercase{#1}}}
\AtBeginDocument{%
	\def\ps@plain{\ps@empty
		\def\@oddfoot{\@monthyear \hfil \thepage}%
		\def\@evenfoot{\thepage \hfil \@volnumber}}
	\def\ps@firstpage{\ps@plain}
	\def\ps@headings{\ps@empty
		\def\@evenhead{%
			\setTrue{runhead}%
			\def\thanks{\protect\thanks@warning}%
%			\uppercase{The Fibonacci Quarterly}\hfil}%
			\uppercase{}\hfil}%
		\def\@oddhead{%
			\setTrue{runhead}%
			\def\thanks{\protect\thanks@warning}%
			\hfill\uppercase{Generalized Fibonacci spiral}}%
		\let\@mkboth\markboth
		\def\@evenfoot{%
			\thepage \hfil \@volnumber}%
		\def\@oddfoot{%
			\@monthyear \hfil \thepage}%
	}%
	\footskip=25pt
	\pagestyle{headings}%
}
\makeatother

\newcommand{\R}{{\mathbb R}}

\newcommand{\N}{{\mathbb N}}
\newcommand{\Z}{{\mathbb Z}}

\theoremstyle{plain}
\numberwithin{equation}{section}
\newtheorem{thm}{Theorem}[section]

\newtheorem{proposition}[thm]{Proposition}

\begin{document}
	%% replace the values in the next three lines by the correct information
%	\monthyear{Month 2020}
	\monthyear{}
%	\volnumber{Volume, Number}
	\volnumber{}
	\setcounter{page}{1}
	
	\title{A generalized Fibonacci spiral}
	\author{Bernhard R. Parodi}
	\address{Gewerblich-industrielles Bildungszentrum Zug, Baarerstrasse 100, CH-6301 Zug, Switzerland}
	\email{bernhard.parodi@gibz.ch}
	\thanks{...}
	
\date{20 April 2020}
\keywords{Linear second-order difference equations, nonhomogenous recurrence relations, exponential inhomogeneity, generalized bivariate Fibonacci sequence, transformed Horadam sequence, Shannon identity, Fibonacci curves and spirals.}

%--------------------------------------------------------------
\begin{abstract}
	As a generalization of planar Fibonacci spirals that are based on the recurrence relation $F_n=F_{n-1}+F_{n-2}$, we draw assembled spirals stemming from analytic solutions of the recurrence relation $G_n=a\, G_{n-1}+b\, G_{n-2}+c\, d\,^n$, with positive real initial values $G_0$ and $G_1$ and coefficients $a$, $b$, $c$, and $d$. The principal coordinates given in closed-form correspond to finite sums of alternating even- or alternating odd-indexed terms $G_{n}$. For rectangular spirals made of straight line segments (a.k.a. spirangles), the even-indexed and the odd-indexed directional corner points asymptotically lie on mutually orthogonal oblique lines. We calculate the points of intersection and show them in the case of inwinding spirals to coincide with the point of convergence. In the case of outwinding spirals, an $n$-dependent quadruple of points of intersection may form. For arched spirals, interpolation between principal coordinates is performed by means of arcs of quarter-ellipses. A three-dimensional representation is exhibited, too. The continuation of the discrete sequence $\{G_n\}$ to the complex-valued function $G(t)$ with real argument $t$$\in$$\R$, exhibiting spiral graphs and oscillating curves in the Gaussian plane, subsumes the values $G_n$ for $t$$\in$$\N$ as the zeros.  Besides, we provide a matrix representation of $G_n$ in terms of transformed Horadam numbers, retrieve the Shannon product difference identity as applied to $G_n$, and suggest a substitution method for finding a variety of other identities and summations related to $G_n$.
\end{abstract}

\maketitle
%--------------------------------------------------------------------
\section{Introduction}

Spiral representations of the (integer valued) Fibonacci numbers and of some of their generalizations (e.g., \cite{oeis}) are known for long and applied both naturally and artificially in two (e.g., \cite{davis71}, \cite{holden75}, \cite{hoggatt76}, \cite{engstrom87}, \cite{ozvatan}, \cite{koshy}) as well as in three dimensions (e.g., \cite{harary11},\cite{nagy18}). For a non-exhausting review on applications of the Fibonacci numbers with some more occurences concerning spirals, see \cite{pletser}. The underlying number generation principle is equivalent to a deterministic nonhomogeneous linear second-order recurrence relation as treated in most introductory textbooks on difference equations (e.g., \cite{jeske63} or recently \cite{elaydi05}). Investigations with the genuine Fibonacci sequence and with some of its various generalizations are often involving an explicit Moivre-Binet-form solution. This is particularly encountered with Horadam sequences without \cite{buschman}, \cite{horadam65} and with an additive constant input \cite{bicknell}, \cite{zhang97}, \cite{filiponi}, \cite{phadte16}.   The explicit solution for the generalized case with real constant coefficients and with an additive exponential input has been recently discussed by Phadte \& Pethe (2013) in \cite{phadte13} and Phadte \& Valaulikar (2016) in \cite{phadte16}. Here we extend their work, firstly, by a couple of finite sums that are applied to, secondly, a visualization of such generalized Fibonacci numbers by means of assembled spirals. Similar to the generation of the original Fibonacci spiral, our piecewise construction uses parametric plots for the interpolation between principal coordinates of each segment. We focus on rectangular spirals (a.k.a. spirangles) and on arched spirals, both in two and three dimensions. Our graphical representations are restricted to lines, we won't draw corresponding spiraling surfaces in 3D space here. Similar to the treatment of Horadam \& Shannon (1988) \cite{horadam88} and Horadam (1988) \cite{horadam88b} for Fibonacci, Lucas, Pell, and Jacobsthal curves, or recently by, e.g., Chandra \& Weisstein \cite{chandra} and \"{O}zvatan \& Pashaev (2017) \cite{ozvatan} for Fibonacci numbers, we exhibit the exponentially generalized Fibonacci sequence by means of a continuous parametric plot in the complex plane. This produces, dependent on the parameter values chosen, either oscillatory or spiraling graphs.

The content of the paper is as follows. In Section 2 the explicit (non-degenerate) solution for the generalized Fibonacci sequence with exponential input is derived, a related generalized product difference identity of order is formulated and shown to correspond to a generalized Shannon identity, and the finite sums of alternating even- or odd-indexed terms are given in closed form for proper use in the sequel. In Section 3, following a short ad-hoc classification of spirals and relying on the results of the previous section, the formalism proposed to draw rectangular and arched two- and three-dimensional generalized Fibonacci spirals is presented and graphical examples are shown. Specific features like the orthogonal positioning of directional corner points in outwinding spirals or some point of convergence for inwinding spirals are discussed and calculated. In Section 4 the index $n$ is replaced by some real variable $t$, leading to generalized Fibonacci spirals and curves in the Gaussian plane. The conclusions are drawn in Section 5, followed by the Appendix in Section 6. The latter harbours a matrix representation for the generalized Fibonacci sequence and a related decomposition into fundamental Horadam numbers as well as some summation formulae, including the principal proof of the paper.    

\section{Generalized Fibonacci sequence with an exponential input} %Sect. 2

The deterministic nonhomogenous linear second-order difference equation
\begin{equation} \label{RR}
G_n=a\, G_{n-1}+b\, G_{n-2}+c\, d\,^n,\,\,\,\,\,n\ge 2  
\end{equation}
with real initial values $G_0, G_1 \in \mathbb{R}$ and subject to real constant coefficients $a$, $b$ for the homogeneous part and alike $c$, $d$ for the inhomogeneous input, has a well-known solution, as recast below for the sake of completeness. If the inhomogeneous input is written as $c\,d^{\,n-m}$, $m\in \mathbb{Z}$, all the following results stay valid with a correspondingly substituted value $c\,d^{-m}$ instead of our choice $c$. Actually, in \cite{phadte13} and \cite{phadte16} the value $m=2$ is used. The sequence corresponds to a \emph{generalized Horadam sequence} \cite{horadam65},\cite{larcombe13}, with the generalization being due to the exponential input and due to the allowance of real parameter values (see below). In the same sense, one may call it a \emph{generalized bivariate Fibonacci sequence} or a generalized Fibonacci polynomial in $a$ and $b$ (e.g., \cite{catalani}) \emph{with an exponential input}.  For the sake of simplicity, we restrict our inquiry hereafter to parameter values $a$, $b$ $\in\R^+$ and $c$, $d$ $\in\R_0^+$.

The formal \textit{extension to negative indices} $-n<0$ is found by means of writing relation \eqref{RR} as $G_{-n+2}=aG_{-n+1}+bG_{-n} +cd^{-n+2}$, yielding 
\begin{equation}\label{RRneg}
G_{-n}=\frac{1}{b} \Big\{G_{-n+2}-aG_{-n+1}-cd^{-n+2}\Big\}.
\end{equation}  
In particular, applying this twice provides for later use
\begin{eqnarray}\label{negind}
G_{-1} & = & \frac{1}{b} \Big\{ G_1-a\,G_0-c\,d \Big\}\\
G_{-2} & = & \frac{1}{b^2} \Big\{ (a^2+b)\,G_0-a\,G_1+c\,(a\,d-b)\Big\}. 
\end{eqnarray}
For practical purposes, any negative integer index value may simply be plug into the formula for the general solution that will be derived in the next section. 

\subsection{Explicit solution} %Sect. 2.2

Inserting the Ansatz $G_n = A\,\lambda^n$ into the corresponding homogeneous (or reduced) recurrence relation \eqref{RR} (i.e., with $c=0$) yields the characteristic equation 
\begin{equation}
\lambda^2-a\lambda-b = 0, \label{chequ}
\end{equation}
with the fundamental set of solutions $\alpha := \lambda_1$, $\beta := \lambda_2$ given by the quadratic formula, i.e., 
\begin{equation}
\alpha=\frac{a+ \sqrt{a^2+4b}}{2} \hspace{1cm}
\beta=\frac{a- \sqrt{a^2+4b}}{2} .
\label{lambda12}
\end{equation}
We restrict our objects of interest to real and distinct solutions $\alpha, \beta \in \mathbb{R}$, $\alpha \ne \beta $, implying a positive discriminant $a^2+4b>0$ (henceforth called restriction 1). Further solutions either for the case $\alpha=\beta=a/2$ or for oscillatory solutions $\alpha, \beta \in \mathbb{C}$ are not tackled here. The roots obey the relations 
\begin{equation}
	\alpha+\beta=a, \hspace{0.5cm}\alpha\beta=-b, 
	\label{alphabeta1}
\end{equation}
\begin{equation}
 (1-\alpha)(1-\beta)=1-a-b, \hspace{0.5cm}
 (d-\alpha)(d-\beta)=d^2-ad-b,
	\label{alphabeta2}
\end{equation}
\begin{equation}
	\alpha^2=\alpha\,a +b,\hspace{0.5cm}\beta^2=\beta\,a +b,
	\hspace{0.5cm} \alpha^2+\beta^2 = a^2+2b, \label{alphabeta3}
\end{equation}
\begin{equation}
	(\alpha^2+1)(\beta^2+1)\,=\,a^2+ (b+1)^2.\label{alphabeta4}
\end{equation}
Hereby the first couple of relations is identical to Vieta's theorem on quadratic equations. As elaborated in many introductory textbooks on difference equations (usually by means of a single worked-through numerical example), the general (or complete) solution of the exponentially nonhomogeneous difference equation \eqref{RR}, subject to four constants and two initial values, is given by
\begin{equation}\label{Gngen}
G_n \,\,=\,\,G_n(a,b,c,d; G_0,G_1)\,\,\,=\,\,\, G_n^{(h)}+G_n^{(p)},
\end{equation}
with integer indices $n\in \Z$, a homogeneous solution (for $c$=0)
\begin{equation}\label{Gnhomo}
G_n^{(h)}=
A\,\alpha^n+B\,\beta^n \hspace{1.6cm}  
\end{equation}
and a particular solution (for $c$$\ne$0, found by inserting a trial term $G_n^{(p)}=pd^n$ into the recurrence relation)
\begin{equation}\label{Gnpart}
G_n^{(p)}=\frac{c\,d^2}{d^2-a\,d-b}\,d^{n} \,=: p\, d^{n}
\hspace{0.6cm} (d^2-a\,d-b \ne 0). 
\end{equation}
We note that our inquiry is restricted to parameter values satisfying the condition $d^2-a\,d-b = (d-\alpha)(d-\beta)\ne 0$ (restriction 2). Degenerate cases with $d=\alpha$ or $d=\beta$ can be dealt with in a follow-up study. 

The constants $A$ and $B$ are formally specified with respect to the initial values $G_0$ and $G_1$: the corresponding general solutions $G_n=A\,\alpha^n+B\,\beta^n+p\,d^{n}$ with $n$=0 or $n$=1 constitute a linear system of equations for $A$ and $B$, namely $G_0 = A+B+p$ and $G_1=A\alpha+B\beta+p\,d$, yielding 
\begin{eqnarray}
A &=& \,\,\,\,\,\,\,\,\frac{G_1-p\,d-(G_0-p)\beta}{\alpha-\beta} \,\,\,\,\,\, =
\,\,\,\,\,\,\,\,\,\frac{H_1 -H_0\beta}{\alpha-\beta}  
\label{A311}\\
B &=& \,\,\,-\,\frac{G_1-p\,d-(G_0-p)\alpha}{\alpha-\beta}\,\,\,\,\, = \,\,\,\, -\,\frac{H_1 -H_0\alpha}{\alpha-\beta},
\label{B311}
\end{eqnarray}
with $\alpha-\beta=\sqrt{a^2+4b}>0$ (due to restriction 1). The transformed numbers
\begin{equation}\label{Hn} 
H_n\equiv G_{n}-p\,d^{\,n} = G_n(a,b,0,0;H_0,H_1)
\end{equation}
are but (real valued) Horadam numbers, obeying the recursion relation 
\begin{equation}
H_n = a\,H_{n-1}+b\,H_{n-2},
\end{equation}
with initial values $H_0$, $H_1$ and with the explicit solution given by equation \eqref{Gnhomo}, together with constants $A$ and $B$ as above. Actually, in equation \eqref{Gnhomo}, $G_n^{(h)}=H_n$. For a matrix representation of $G_n$ in terms of $H_n$, and for a related decomposition, we refer to the Appendix.

To summarize, equations \eqref{Gngen}-\eqref{B311} constitute a fully specified explicit solution
\begin{equation}\label{Gn}
G_n=A\,\alpha^n+B\,\beta^n+p\,d^{n} =H_n+\,p\,d^{n}
\end{equation} 
for the implicit relation \eqref{RR}. For the special value $d$=1 one relies upon the solution of a nonhomogenous second-order recurrence relation with constant coefficients. We finally note that on defining $\gamma :=\max\{\lvert\alpha\rvert,\lvert\beta\rvert,\lvert d\rvert\}$ for the dominant characteristic root or base  (with distinct $\alpha$, $\beta$, and $d$), the dominant solution in the very large-$n$-limit behaves as
\begin{equation}
n\gg 1:\hspace{1cm}G_n \,\,\,\begin{cases}
\propto \gamma^n\hspace{0.7cm}(\gamma \ge 1)\\
\approx 0 \hspace{0.95cm}(\gamma < 1)
\end{cases}.\label{Gnlarge}
\end{equation}

\subsection{A selected identity}

Selected sums and identities involving $G_n$ are given by Phadte \& Pethe (2013) in \cite{phadte13} and by Phadte \& Valaulikar (2016) in \cite{phadte16}. A couple of their results with respect to summations are reformulated in our notation in the Appendix. Here we extend their work with an additional nonlinear identity and in Section 2.3 with a novel summation formula that will be applied in Section 3. By insertion of the solution formula for $G_n$ (equ.  \ref{Gn}) it is a straightforward algebraic exercise to obtain the following \emph{product difference identity}
\begin{equation}
G_{n+u}G_{n+v} -G_{n+u+v}G_{n} =  (-1)^{n+1}b^n\,AB\,(\alpha^u-\beta^u)(\alpha^v-\beta^v)
\nonumber
\end{equation}
\begin{equation}\label{identity1a}
\hspace{0.8cm} +p\,d^{\,n+u+v}
\bigg\{ G_{n+u}d^{-u} +G_{n+v}d^{-v}-G_{n+u+v}d^{-u-v}-G_{n}\bigg\}.
\end{equation}
Inserting the conjoined \textit{fundamental} Horadam numbers $h_n=G_n(a,b,0,0;0,1)=(\alpha^{n}-\beta^{n})/ (\alpha-\beta)$ (i.e., $p=0$ and with initial values 0 and 1, corresponding to the Lucas sequence of the first kind), the first term on the right-hand side may be reexpressed as
\begin{equation}\label{identity1a2}
  (-1)^{n+1}b^n\,AB\,(\alpha^u-\beta^u)(\alpha^v-\beta^v)
 = (-b)^n(H_1-H_0\alpha)(H_1-H_0\beta)\,h_uh_v.
\end{equation}
For the special case of Horadam numbers $H_n=G_n(a,b,0,0;H_0,H_1)$ (i.e., with $p=0$), identity \eqref{identity1a} was already presented together with relation \eqref{identity1a2} in an equivalent form by Horadam (1987) \cite{horadam87a}, \cite{horadam87b}. Alternatively, adopting the notation in terms of transformed Horadam numbers (equation \ref{Hn}), relation \eqref{identity1a} may succinctly be written as
\begin{equation}\label{identity1b}
H_{n+u} H_{n+v}-H_{n+u+v} H_{n}=(-b)^n\, \bigg\{ \,H_u\,H_v-H_{u+v}H_0\bigg\}.
\end{equation}
This elegant product difference identity of order two for generalized Fibonacci numbers seems to have been formulated first by Shannon (1988) in \cite{shannon88} (Lemma 2.3) for Horadam numbers and henceforth is termed \emph{Shannon identity}. It obviously constitutes a generalisation of the well-known Tagiuri identity $F_{n+u}F_{n+v} -F_{n+u+v}F_{n}=(-1)^n\,F_u\,F_v$ for Fibonacci numbers $F_n=G_n(1,1,0,0;0,1)$ (\cite{tagiuri01},\cite{everman},\cite{vajda}). For properly chosen indices $u$ and $v$, relation \eqref{identity1b} subsumes generalized versions of, for example, the Catalan identity ($u=-v$) and the d'Ocagne identity ($u=m-n$, $v=1$) (e.g., \cite{knott}). Here we even allow for real valued and transformed numbers $H_n=G_n-pd^n$. In a reversed presentation of a proof, relation \eqref{identity1a} for the generalized Fibonacci numbers follows from Shannon's identity for Horadam numbers by simply substituting the transformation \eqref{Hn}. Actually, \emph{all known identities for (pure) Horadam numbers $H_n$ must hold for the substitution $G_n-pd^n$ as well, thus indirectly providing a variety of available identities and summations for the generalized Fibonacci numbers $G_n$}. For example, the higher-order identity for $G_n^3-b^3G_{n-1}^3$ or the weighted sum $\sum_{k=1}^{n-1}(-b)^kG_n$ could be found by means of the corresponding identity and sum formulae for Horadam numbers, given in \cite{horadam65} as equations (4.25) and (4.26), respectively. An actual application of this substitution method is provided in Section 6.3.2.

\subsection{Sums of alternating even- or odd-indexed terms} %sect. 2.4

Closed-form expressions for the sums $\sum_{k=0}^{n}\,G_{k}$ and $\sum_{k=0}^{n} (-1)^{k}\,G_{k}$ are given in the Appendix for later use. The sums $\sum_{k=0}^{n}\,G_{2k}$ and $\sum_{k=0}^{n}\,G_{2k+1}$ could easily be deduced by means of the substitution method suggested above upon using the corresponding results for Horadam numbers (\cite{horadam65}, equations 3.12 and 3.13). Additionally, in order to prepare some visuali\-zation of the sequence $\{G_n\}$ by means of rectangular and curvy spirals, another couple of sums of differences is calculated here. 

\subsubsection{Formalism.} We are concerned with the sum of the first \emph{alternating even-indexed} terms up to term $G_n$ ($n$ even), $\sum_{k=0}^{n/2} (-1)^{k}\,G_{2k}$, and with the sum of the first \emph{alternating odd-indexed} terms up to term $G_n$ ($n$ odd), $\sum_{k=0}^{(n-1)/2} (-1)^{k}\,G_{2k+1}$. Written compactly, we seek
\begin{eqnarray}\label{SumAlt} 
\Gamma_{n}^{}\,\equiv\,
\sum_{k=0}^{(n-\nu)/2} (-1)^{k}\,G_{2k+\nu}\,,\hspace{0.2cm} \,\hspace{0.4cm}\nu = n\,\mathtt{mod}\,2\,\in\,\{0,\,1\}.
\end{eqnarray}
The mutual summation function $\Gamma_{n}^{}$ ---with index variable $n$ and with an index-dependent and thus \textit{constrained} binary parameter $\nu$ that rules the case distinction between even-indexed terms ($\nu$=0) and odd-indexed terms ($\nu$=1)--- can be expressed in closed form as  
\begin{eqnarray}\label{Gamman}
\Gamma_{n}^{} &=& \frac{1}{a^2+(b+1)^2} \bigg\{ (-1)^{(n-\nu)/2}\Big( G_{n+2}+b^2 G_n \Big) +G_\nu +b^2G_{\nu-2}
\nonumber\\ 
& &-\,c\,\frac{d^2+ad-b}{d^2+1} \,
\bigg( \,(-1)^{(n-\nu)/2}d^{n+2}+d^{\nu}\, \bigg)\,\bigg\},
\,\,\,\,\,\,\nu = n\,\mathtt{mod}\,2,  
\end{eqnarray}
where $G_{-1}^{}$ and $G_{-2}$ are given by equations \eqref{negind}f or simply are $G_n$ evaluated for $n=-1$ and $n=-2$, respectively.  A couple of proofs is relegated to the Appendix.

Consequently, the simple difference relation  
\begin{equation}\label{diffrel}
\Gamma_{n}-\Gamma_{n-2}=(-1)^{(n-\nu)/2}G_n
\end{equation}
holds, as can easily be checked by means of either definition \eqref{SumAlt} or expression \eqref{Gamman}. In the very large-$n$-limit, one finds due to relation \eqref{Gnlarge}  
\begin{equation}\label{Gammanlarge}
n\gg 1:\hspace{1cm}\Gamma_n \,\,\,
\begin{cases}\propto(-1)^{(n-\nu)/2}\,\,\gamma^{n+2} \hspace{5.0cm}(\gamma \ge 1)\\
\approx\frac{1}{a^2+(b+1)^2} \bigg\{ G_\nu +b^2G_{\nu-2}
-\,c\,\frac{d^2+ad-b}{d^2+1}\,d^{\nu}\,\,\bigg\}
\hspace{1cm}(\gamma < 1).
\end{cases}
\end{equation}

We note that the ($\gamma\ge 1$)-case is divergent, while the ($\gamma< 1$)-case converges to a constant (albeit $\nu$-dependent) value. 

\subsubsection{Examples.} Some special cases for $\Gamma_n$ are given now by means of providing algebraic and numerical examples, hereby partly relying upon integer values for $G_n$. Firstly, the sum of the \emph{alternating even- or odd-indexed Fibonacci numbers} $F_n=G_n(1,1,0,0;0,1)$ is calculated with
\begin{eqnarray}
\Gamma_{n}^{\mathtt{Fibo}} &=& \frac{1}{5} \bigg\{ \,\,(-1)^{(n-\nu)/2}\big( F_{n+2}+F_n \big)  +F_\nu+F_{\nu-2} \,\bigg\}, 
\end{eqnarray} 
where $F_{-1}=F_1=1$ and $F_{-2}=-F_2=-1$ according to the negation rule $F_{-n}=(-1)^{n+1}F_n$. For example, taking $n=9$ (hence, $\nu=1$) one has $\sum_{k=0}^{4}\,(-1)^k F_{2k+1}=F_1-F_3+F_5-F_7+F_9=1-2+5-13+34 = 25$ that is equal to $\Gamma_{9}^{\mathtt{Fibo}}=(1/5)(F_{11}+F_9+F_1+F_{-1})=(1/5)(89+34+1+1)=25$. Hansen (1978) \cite{hansen} already  provided some general summation formula for alternating even- or odd-indexed Fibonacci numbers.
  
Secondly, the sum of the first \emph{alternating even- or odd-indexed Horadam numbers} $H_n=G_n(a,b,0,0;H_0,H_1)$ (including the Lucas, the Pell, or some other famous sequences) can be calculated by means of
\begin{eqnarray}\label{GammaHora}
\Gamma_{n}^{\mathtt{Hora}} &=& \frac{1}{a^2+(b+1)^2} \bigg\{ \,\,(-1)^{(n-\nu)/2} \Big( H_{n+2}+\,b^2H_{n}  \Big) +\,H_{\nu}+b^2\,H_{\nu-2}\,\bigg\},
\end{eqnarray} 
with $H_{-1}=(H_1-aH_0)/b$ and $H_{-2}=((a^2+b)H_0-aH_1)/b^2$ according to the negation rules \eqref{negind}f. For example, specifying $a$=1 and $b$=1 and renaming the two initial values will reproduce the particular sum formulas given in \cite{walton} by Walton \& Horadam (1974, equations 4.11f). We note that knowing relation \eqref{GammaHora} in advance (as was not the case with respect to the author), deriving the summation formula \eqref{Gamman} is basically a matter of substitution only. In order to exemplify the substitution method, we correspondingly perform a second proof of \eqref{Gamman} in the Appendix.

To summarize this section: Having put the two restrictions $a^2$$+$$4b$$>$$0$ and $d^2$$-$$a\,d$$-$$b$$\ne$$0$, identity \eqref{identity1a} and \eqref{identity1b} and series \eqref{SumAlt} together with summation function \eqref{Gamman} hold for all integer as well as all real solutions \eqref{Gngen}-\eqref{B311} of the defining relation \eqref{RR}.

%---------------------------------------------------------------
\section{Generalized Fibonacci spirals}

\subsection{Classification of spirals}
Based on the generalized sequence discussed in the previous section, the following twofold ad-hoc classification is provisionally suggested for the variety of corresponding spiral representations:
 
I. Algebraic structure. Refering to frequently heard modes of expression and based on the second-order recurrence relation, we may distin\-guish four types of sequences and corresponding spirals according to a growing degree of generalization: 
\begin{itemize}
\item[(i)] the initial or genuine \textit{Fibonacci numbers}  $G_n(1,1,0,0; 0,1)$ constitute a corresponding ordinary or \textit{genuine Fibonacci spiral}.
\item[(ii)] The \textit{generalized genuine}  Fibonacci numbers $G_n(1,1,0,0; G_0,G_1)$ with any real valued pair of initial values $G_0$, $G_1$. This results in \textit{generalized genuine Fibonacci spirals}. 
\item[(iii)] The \textit{generalized bivariate} Fibonacci numbers $G_n(a,b,0,0; G_0,G_1)$ with real coefficients $a$, $b$ and real initial values $G_0$, $G_1$. In the context of integer coefficients and integer initial values, these are often called \textit{Horadam numbers}. The numbers $G_n(a,b,0,0; 0,1)$  original/genuine/fundamental Horadam numbers. Here we allow for real values in general (cf. Section 2.2).  Accordingly, one gets \textit{bivariate Fibonacci spirals} or \emph{Horadam spirals}.
\item[(iv)] The generalized Fibonacci numbers $G_n(a,b,c,d; G_0,G_1)$ with an \textit{exponential input}. For brevity this could be termed "exponacci numbers". The corresponding spirals may then be called \textit{exponentially generalized Fibonacci spirals} or \textit{exponacci spirals}. 
\end{itemize}
For short, cases (ii) to (iv) may all be addressed as \textit{generalized Fibonacci sequences}, irrespective of being either integer or real sequences. Correspondingly, there are then three types of \textit{generalized Fibonacci spirals}. 

II. Geometric composition. A second class of classification concerns the geometric construction of the spirals and hence their visual appearance. As will be evident, we distinguish 
\begin{itemize}
\item[(i)] \textit{shapes} between \textit{rectangular}, \textit{angular}, and \textit{arched} spirals, all of which are composed of elementary graphs added together. While rectangular and angular spirals are drawn by connecting successive points with straight lines, arched spirals are composed of assembled quarter-circles (relying on cases I.i and I.ii from above) or with quarter-ellipses (based on cases I.iii and I.iv). Another type of curved spiral is a spiral given in \textit{polar form}; it is not based on a discret sequence but on a continuous function and will be treated in section 4. A further geometric characteristics involves 
\item[(ii)] \textit{winding}, whereby outwinding means spiraling outwards and inwinding means spiraling inwards. The formal criterion for this distinction is due to the characteristic value $\gamma :=\max\{\lvert\alpha\rvert,\lvert\beta\rvert,\lvert d\rvert\}$ introduced above, 
\begin{equation}
\gamma \ge 1:\,\,\,\mathtt{outwinding},\,\,\,\,\,\,\,\,\,
\gamma < 1:\,\,\,\mathtt{inwinding}.
\end{equation}
For the first case, some parameter values may allow for some inward spiraling for small values of $n$ (as long as $A\alpha^n+B\beta^n$, with $\alpha<1$ and $\beta<1$, dominates the values for $G_n$), followed by outward spiraling for $n$ large enough (as soon as $pd^n$, with $d>1$, dominates $G_n$). We note that if $\gamma=1$, the spiral approaches an attractor of quadratic shape and exhibits cyclic behaviour. The second case ($\gamma<1$)  corresponds to the combined condition $d<1$ and $0<b<1-a^2/2$ (as follows from relation \ref{alphabeta3}, requiring $\alpha^2<1$, $\beta^2<1$). This corresponds to a stable recurrence with the iterates converging to a fixed value (see Section 3.2.2).
\item[(iii)] Last but not least there's leftward or rightward \textit{orientation}. Typically, negative parameter values $a$ or $b$ may govern rightward orientation.
\end{itemize}
For example, one may have a leftwardly outwinding rectangular spiral (as in the left panel of Figure 1, black line) or a leftwardly inwinding arched spiral (right panel, red line). \newline 

\begin{figure*}	\label{Fig1}
	\centering 
	\includegraphics[width=0.49\textwidth]{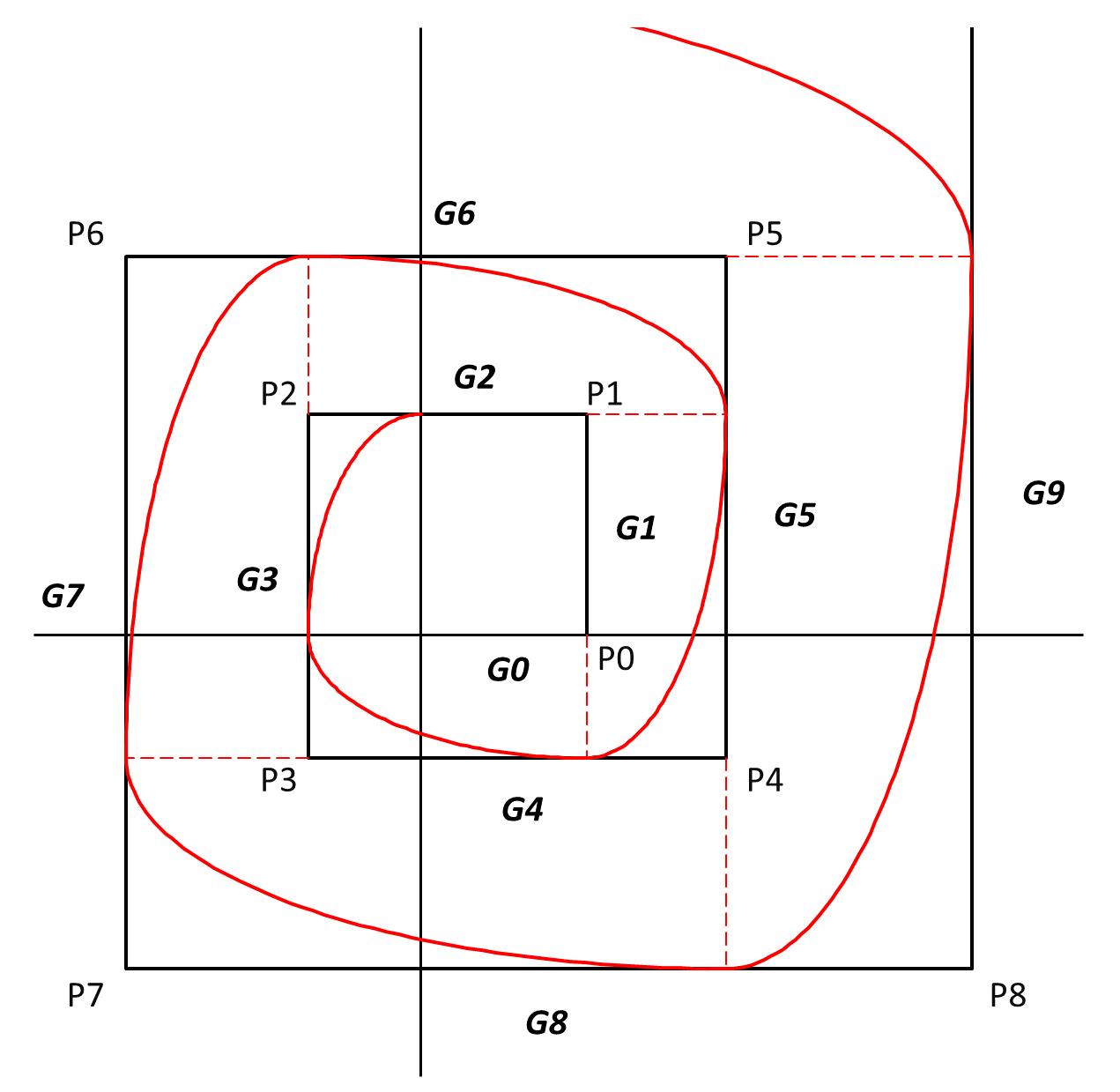}
	\includegraphics[width=0.49\textwidth]{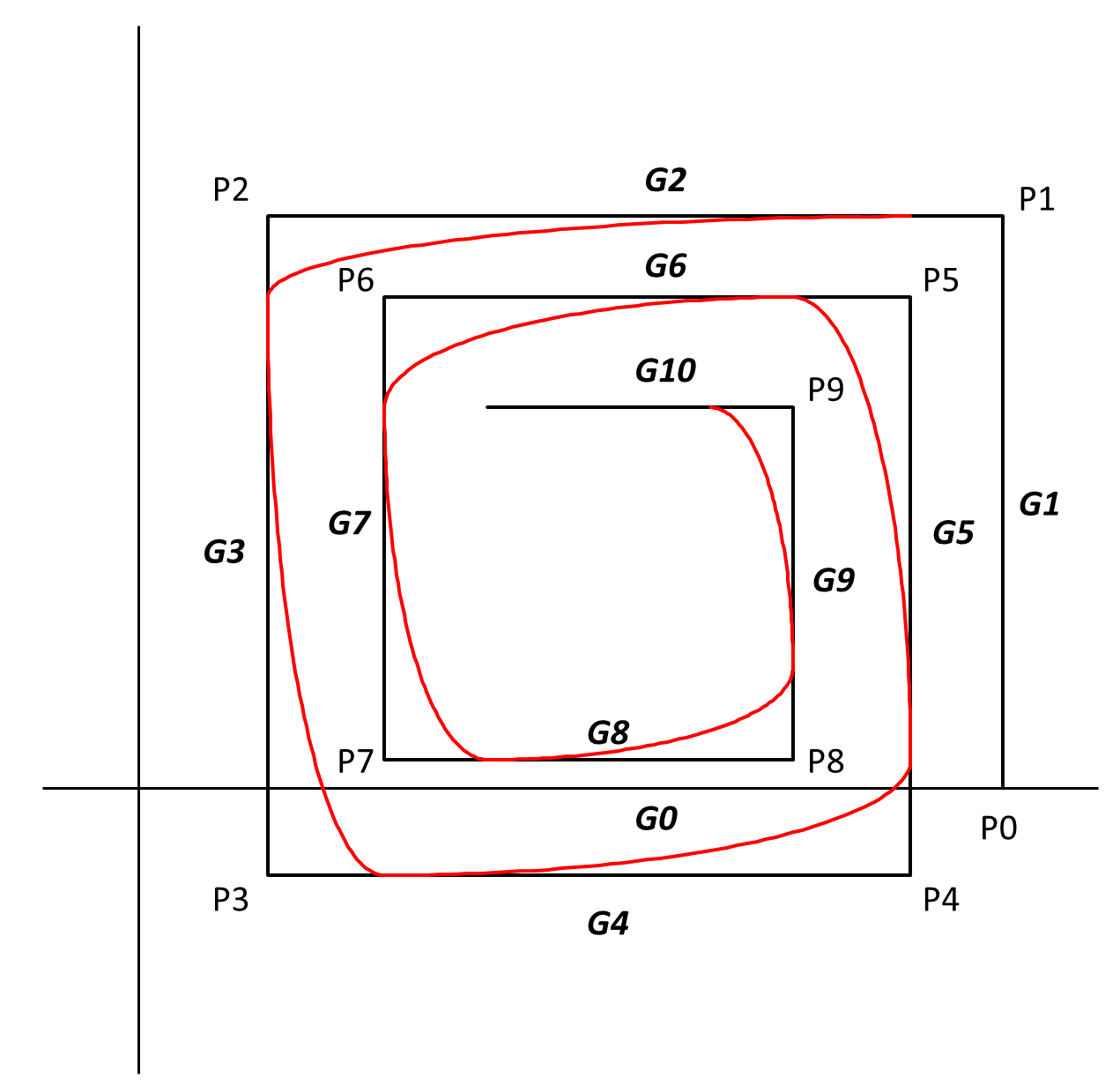}
	\caption{\footnotesize Construction of rectangular and arched generalized Fibonacci spirals (drawn in black and red, respectively), with inserted denotations for partitioned arm lengths $G_n$ (given by a generalized Fibonacci sequence) and corner points $P_n$ (with coordinates determined according to the entries in Tables 1). The rectangular areas used to draw arches by means of quarter-ellipses are indicated through replenished side lines (red dashed lines); these rectangles constitute at the same time a basic tiling pattern. Outwinding spirals are shown in the left panel, inwinding spirals in the right panel. \normalsize}
\end{figure*}
		
\subsection{Rectangular spirals}
\subsubsection{Corner points}
Rectangular genuine Fibonacci spirals are drawn in a two-dimensional Cartesian coordinate system by means of partitioned spiral arms, whose successive linear segments meet at right angles and with lengths equal to successive Fibonacci numbers $F_n=G_n(1,1,0,1;0,1)$ (see Figure 1 for the construction scheme and Figure 2 (upper left panel, black line) for a true-to-scale map). The corresponding corner points $P_n=(X_n;Y_n)$ ($n\in \mathbb{N}_0$) are $P_0=(F_0;0)$, $P_1=(F_0;F_1)$, $P_2=(F_0-F_2;F_1)$, $P_3=(F_0-F_2;F_1-F_3)$, and so on. In general, the coordinates of the corner points are given by sums of alternating even- or odd-indexed Fibonacci numbers $F_n$ according to the entries in Table 1 (top). At every corner point there is a turn to the left by 90$^\circ$, and the distance between any two successive corner points is equal to the related Fibonacci number, i.e., $\overline{P_{n-1}P_n} = F_{n}$.

Similarly, using generalized Fibonacci numbers $G_n$ and adopting relations \eqref{SumAlt} and \eqref{Gamman} for $\Gamma_{n}$, the corner points of a \emph{rectangular generalized Fibonacci spiral} are located at coordinates as listed in Table 1 (bottom). In general, the position of the $n$-th corner point is given by 
\begin{eqnarray}
P_n &=& \big(X_n;\,Y_n \big) \,\,= 
\begin{cases}
\,\,\,\,\big(\,\Gamma_{n}^{}\,\,;\,\,\,\,\Gamma_{n-1}^{} \,\big)
\hspace{0.7cm}n\,\,\,\,\mathtt{even} \\
\,\,\,\,\big(\,\Gamma_{n-1}^{}\,\,;\,\,\,\,\Gamma_{n}^{} \,\big)
\hspace{0.7cm}n\,\,\,\,\mathtt{odd}.
\end{cases} \label{Pn}
\end{eqnarray}
We note that all coordinates $X_n$ go with $\nu = 0$ and all $Y_n$ have $\nu=1$. This reflects the spiral's principle of construction with the $x$- and $y$-coordinates of the corner points being calculated by differences with even-indexed and odd-indexed values of $G_n$, respectively. By construction, the distance relation
\begin{equation}
\overline{P_{n-1}P_n} = G_{n} 
\end{equation}
holds. This can formally be checked by inserting equations \eqref{SumAlt} and \eqref{Pn} into $\overline{P_{n-1}P_n}= [(X_n-X_{n-1})^2+(Y_n-Y_{n-1})^2]^{1/2}$. At every corner point there is a turn to the left by 90$^\circ$ for positive values of $G_n$ (below) or a turn to the right by 90$^\circ$ for negative values of $G_n$.  As an illustration, we shown in Figure 2 (upper panels) a couple of leftwardly outwinding \emph{rectangular generalized Fibonacci spirals} (black lines). They are based on the genuine Fibonacci numbers $F_n=G_n(1,1,0,0;0,1)$ and on the generalized Fibonacci numbers $G_n(0.5,0.8,1,0.9;3,4)$. If instead successive starting points would be directly connected with straight lines, this would produce an \emph{angular spiral} (not shown).

\begin{table}[htp] \label{Table1}
	\footnotesize\centering
	\begin{tabular}{|c|l|l|}
		\hline
		\multicolumn{3}{|c|}{Rectangular Fibonacci spiral$^{}$}\\
		\hline
		$P_n$	&$X_n$ 			&	$Y_n$ 		\\
		\hline
		$P_0$	&$F_0$ 			&	0	  		\\
		$P_1$	&$F_0$ 			&	$F_1$ 	 \\
		$P_2$	&$F_0-F_2$		&	$F_1$      \\
		$P_3$	&$F_0-F_2$		&	$F_1-F_3$ \\
		$P_4$	&$F_0-F_2+F_4$	&	$F_1-F_3$ \\
		$P_5$	&$F_0-F_2+F_4$	&	$F_1-F_3+F_5$\\
		$P_6$	&$F_0-F_2+F_4-F_6$	&	$F_1-F_3+F_5$\\
		$P_7$	&$F_0-F_2+F_4-F_6$	&	$F_1-F_3+F_5-F_7$\\
		\hline
		\multicolumn{3}{|c|}{Rectangular generalized Fibonacci spiral$^{}$}\\
		\hline
		$P_n$	&$X_n$ 			& 	$Y_n$ \\
		\hline
		$P_0$	&$G_0$ 			& 	0   \\
		$P_1$	&$G_0$ 			& 	$G_1$  \\
		$P_2$	&$G_0-G_2$		& 	$G_1$  \\
		$P_3$	&$G_0-G_2$ 		& 	$G_1-G_3$ \\
		$P_4$	&$G_0-G_2+G_4$ 	& 	$G_1-G_3$ \\
		$P_5$	&$G_0-G_2+G_4$	& 	$G_1-G_3+G_5$ \\
		$P_6$	&$G_0-G_2+G_4-G_6$	&	$G_1-G_3+G_5$\\
		$P_7$	&$G_0-G_2+G_4-G_6$	&	$G_1-G_3+G_5-G_7$\\
		$P_n\begin{cases}\tiny{n\,\mathtt{even}}\\
		\tiny{n\,\mathtt{odd}}\end{cases}$ &
		$\begin{cases}\,\,\Gamma_{n}\\  \,\,\Gamma_{n-1}\end{cases}$ &
		$\begin{cases}\,\,\Gamma_{n-1}\\\,\,\Gamma_{n}\end{cases}$ \\
		\hline \multicolumn{3}{c} \normalsize
	\end{tabular}\newline
	\caption{\small Coordinates of the first few \emph{corner points} $P_n=(X_n;\,Y_n)$ for rectangular Fibonacci spirals and for rectangular generalized Fibonacci spirals. For the general case, the involved summation function $\Gamma_{n}$ is given in closed form by equations \eqref{SumAlt} and \eqref{Gamman}. See Figure 1 for an illustration. \normalsize}\normalsize
\end{table} %-------------------------------------------------
\begin{table}[htp] \label{Table2}
	\footnotesize\centering
	\begin{tabular}{|c|c|l|l|}
		\hline
		\multicolumn{4}{|c|}{Arched Fibonacci spiral: quarter-circles$^{}$}\\
		\hline
		arc $\#$& center & $e_x$ 	& $e_y$   	\\
		\hline
		$1$ 	 & $P_{0}$		& $F_1$ 	& $F_1$		\\
		$n\ge 2$ & $P_{n-2}$	& $F_n$ 	& $F_n$		\\
		\hline
		\multicolumn{4}{|c|}{Arched generalized Fibonacci spiral: quarter-ellipses$^{}$}\\
		\hline
		arc $\# $& center & $e_x$ 	& $e_y$   	\\
		\hline
		1 & $P_0$   & $G_2-G_0$ & $G_1$		\\
		\multicolumn{4}{|l|}{outwinding:$^{}$}\\
		$\begin{cases}n\,\mathtt{even}\\
		n\,\mathtt{odd}\end{cases}$ & $P_{n-2}$ &
		$\begin{cases} G_n\\G_{n+1}-G_{n-1} \end{cases}$ &		$\begin{cases} G_{n+1}-G_{n-1}\\G_n \end{cases}$ \\
		\multicolumn{4}{|l|}{inwinding:$^{}$}\\
		$\begin{cases}n\,\mathtt{even}\\
		n\,\mathtt{odd}\end{cases}$ & $P_{n+4}$ &
		$\begin{cases} G_{n+4}-G_{n+2} \\G_{n+3} \end{cases}$ &		$\begin{cases} G_{n+3} \\G_{n+4}-G_{n+2}  \end{cases}$ \\
		\hline  \multicolumn{3}{c} \normalsize
	\end{tabular}\newline\newline
	\caption{\small Quarter ellipses for arched generalized Fibonacci spirals: the \emph{center points} and lengths of the semi-axes $e_x$ and $e_y$ in $x$- and $y$-direction, respectively, for the $n$th arc. The center points of the arcs coincide with particular corner points $P_i$ for rectangular spirals (given in Table 1). For an illustration, see Figure 1. Arched Fibonacci spirals represent a special case with $F_n=G_n(1,1,0,0;0,1)$. Because Fibonacci numbers obey the identity $F_{n+1}-F_{n-1}=F_n$, their quarter ellipses reduce to quarter circles with fixed radii $F_n$. Figure 2 (upper left panel) provides an illustration.\normalsize}\normalsize
\end{table}		
\begin{figure*}	\label{Fig_2}
	\centering 
	\includegraphics[width=0.45\textwidth]{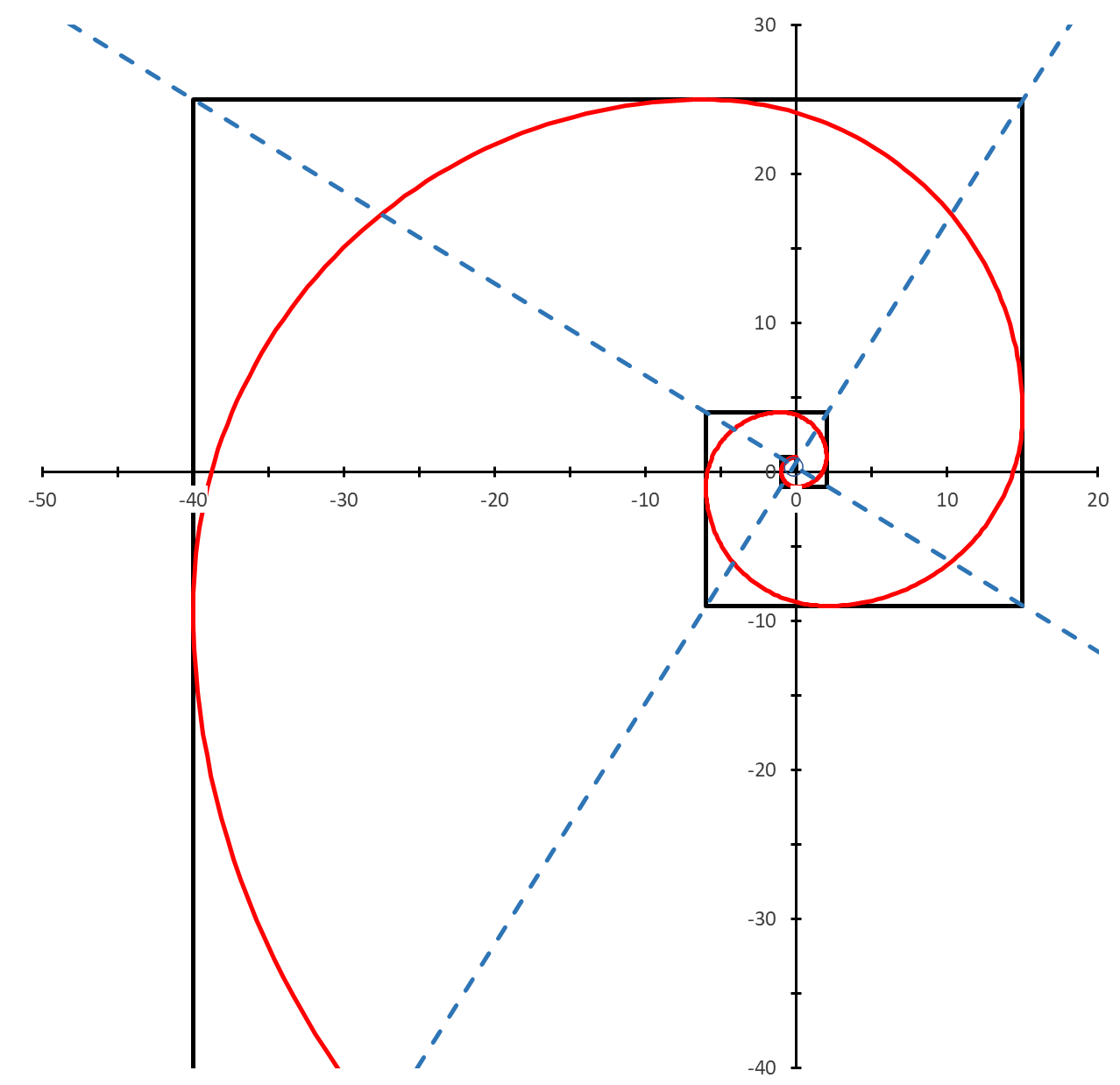}
	\includegraphics[width=0.45\textwidth]{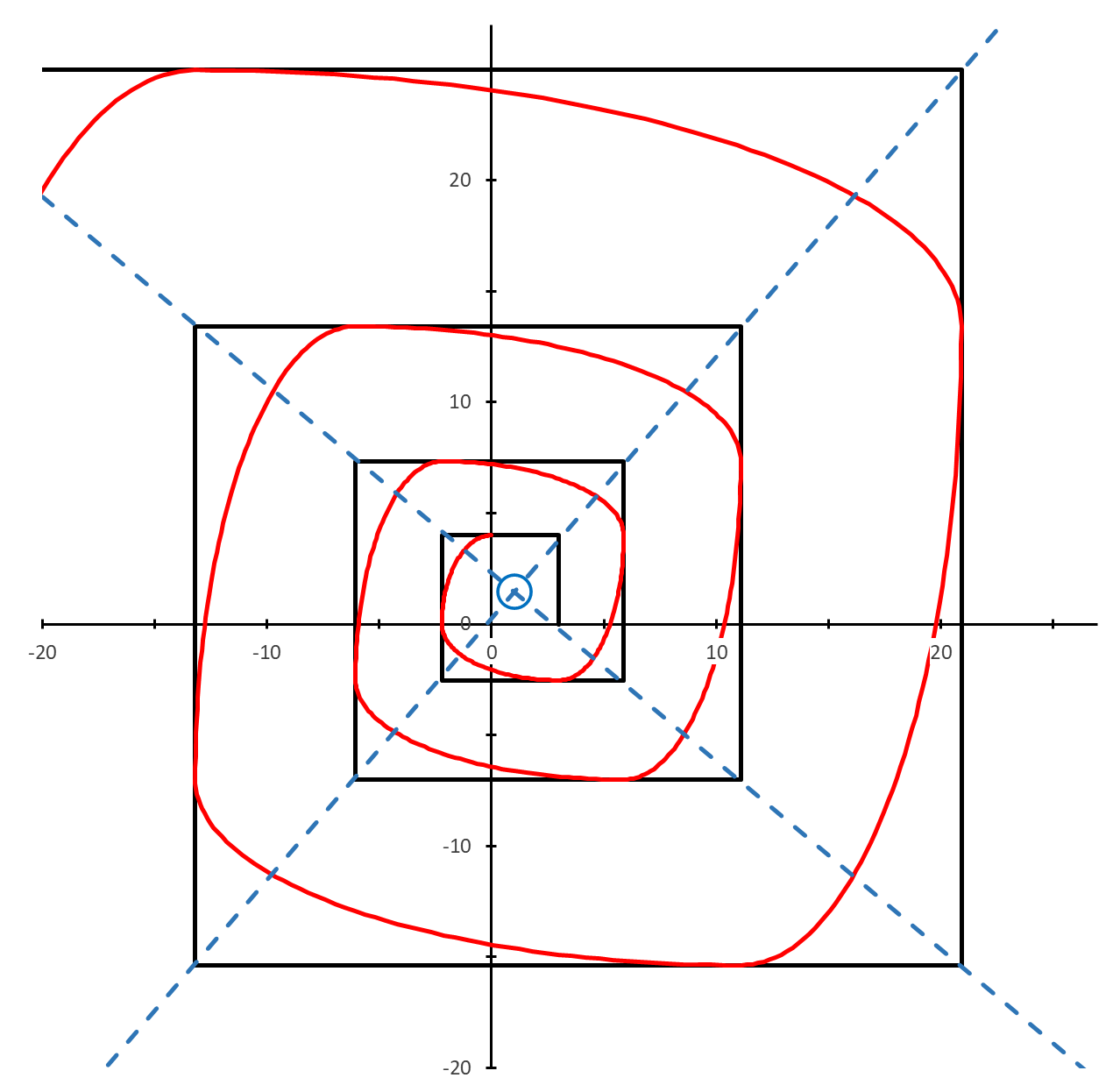}
	\includegraphics[width=0.45\textwidth]{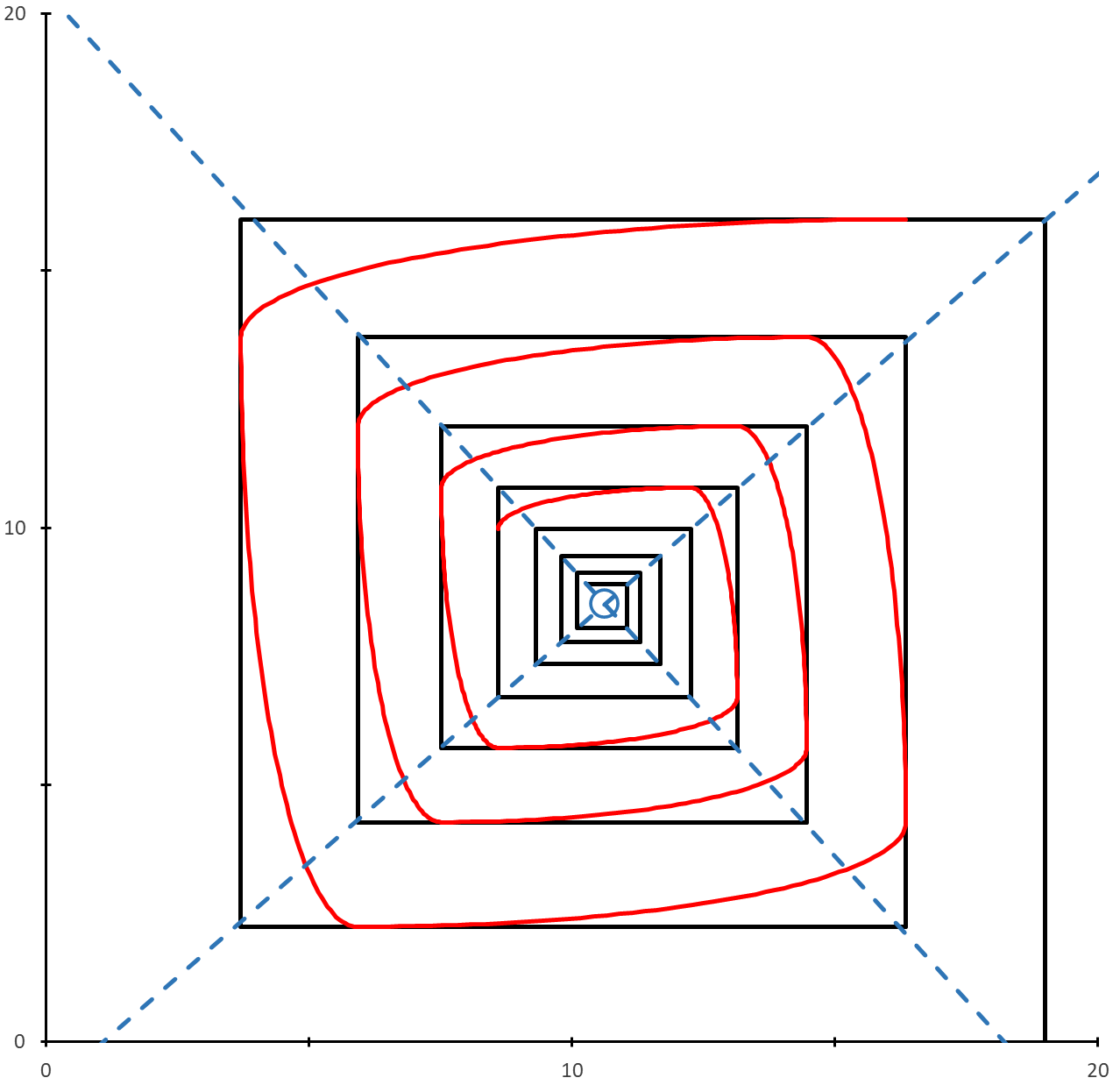}
	\includegraphics[width=0.45\textwidth]{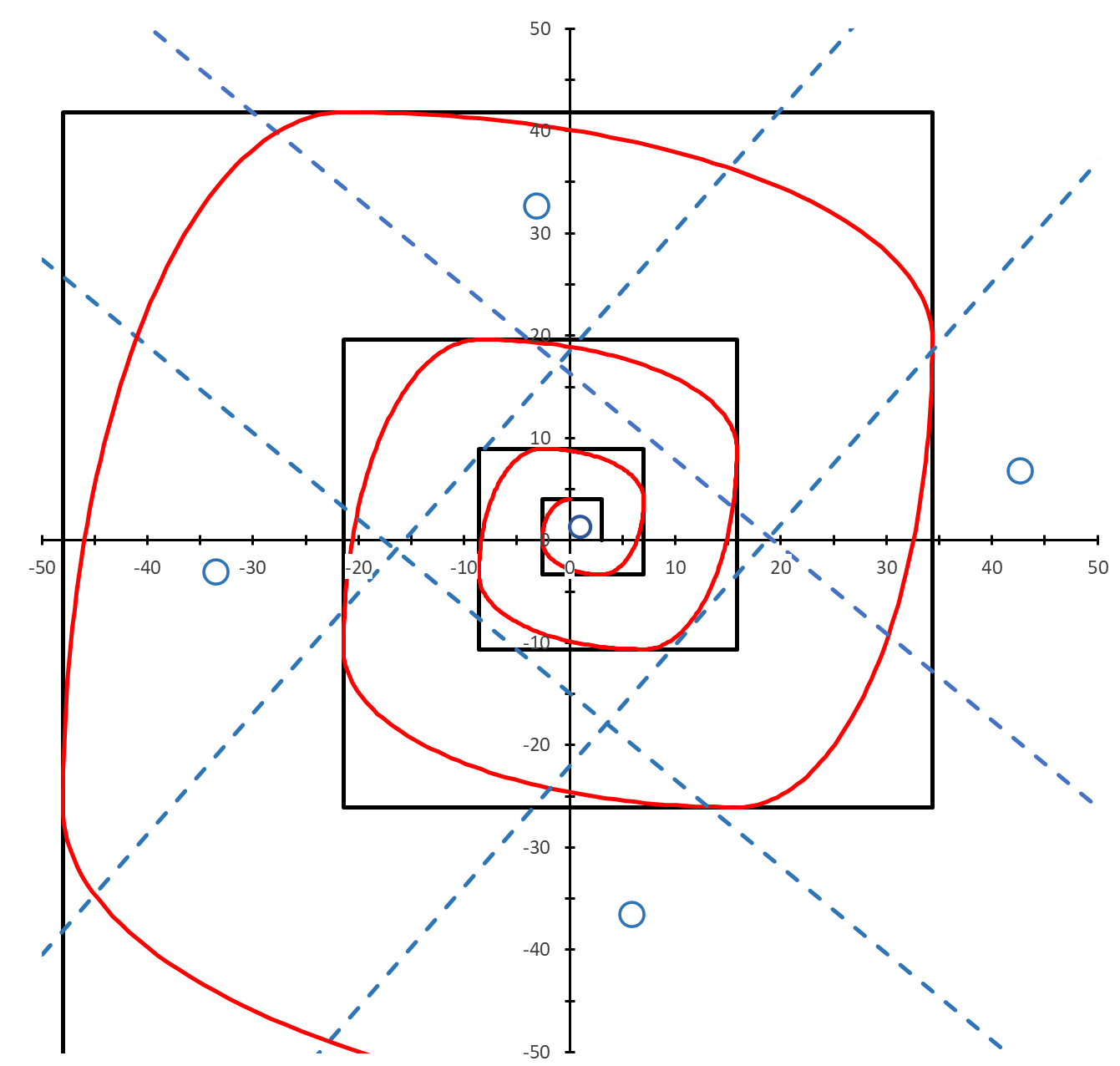}
	\caption{\footnotesize Examples of rectangular and arched generalized Fibonacci spirals. \newline
		\textit{Upper left}: Genuine Fibonacci spirals tracing the Fibonacci numbers $F_n=G_n(1,1,0,0;\,0,1)$ ($n\in \mathbb{N}_0$) by means of either perpendicular arms with lengths 0, 1, 1, 2, 3, 5, 8, 13, $\dots\,$ (black lines) or successive quarter-circles with corresponding radii (red). The oblique asymptotes for the directional corner points (blue dashed lines) are orthogonal, with slopes $\Phi\approx1.618$ (i.e., the golden ratio) and $-1/\Phi\approx -0.618$, and intersect at the point $P^*=(-1/5;\,2/5)$.
		\textit{Upper right}: Outwinding generalized Fibonacci spirals with $G_n(0.5,0.8,1,0.9;\,3,4)$, both rectangular and elliptically arched. The point $P^*\approx(1.033; \,1.502)$ coincides with the point of intersection for the asymptotes of directional corner points (shown dashed for the lines along $P_{48}P_{52}$ and $P_{49}P_{53}$). \textit{Lower left}: Inwinding generalized Fibonacci spirals with $G_n(0.5,0.35,1,0.8;\,19,16)$. The point of convergence, located at $P^*\approx(10.63;$\,$8.52)$, meets the point of intersection of the asymptotes.
		\textit{Lower right}: Outwinding generalized Fibonacci spirals for $G_n(0.5,0.8,1,1.1;\,3,4)$ (differing only in the parameter value for $d$ with respect to the upper right figure): because now $\alpha>1$ and $d>1$, the points of intersection for the asymptotes (here represented by the dashed lines $P_{48}P_{52}$, $P_{49}P_{53}$, $P_{50}P_{54}$, and $P_{51}P_{55}$) drift away from $P^\star$ (central circle). The other four circles in the vicinity give the positions of approximately calculated points of intersection (see text for details). \normalsize}
\end{figure*}

\subsubsection{Orthogonal asymptotes}

Assuming throughout this subsection positive initial values $G_0$ and $G_1$, the corner points $P_0$, $P_4$, $P_8$, $\dots$, $P_{4k}$, $\dots$ ($k\in\N_0$) constitute the infinite set of all lower-right corner points. In a similar manner, the sequences $\{P_{1+4k}\}$, $\{P_{2+4k}\}$, and $\{P_{3+4k}\}$ form the sets of upper-right, upper-left, and lower-left corner points, respectively (see Figure 1). In general, one has sets of \textit{directional corner points} $\{P_{j+4k}\}$, where $k\in\N_0$ is the running number and the constant value $j\in\{0,1,2,3 \}$ fixes the direction. Stated differently, any corner point $P_n$ corresponds to a directional corner point with directional index $j=n\,\mathtt{mod}\,4$. The slope of a straight line between two neighbouring directional corner points $P_{n}$ and $P_{n+4}$ (implying an equal index value $j$ and with  coordinates as given by equation \ref{Pn}) is determined by
\begin{equation}
\frac{Y_{n+4}-Y_{n}}{X_{n+4}-X_{n}}=
\begin{cases}
\,\,\frac{\Gamma_{n+3}-\Gamma_{n-1}}{\Gamma_{n+4}-\Gamma_{n}}
%=\frac{(-1)^{(n+3-\nu)/2}G_{n+3}+(-1)^{(n+1-\nu)/2}G_{n+1}}{(-1)^{(n+4-\nu)/2}G_{n+4}+(-1)^{(n+2-\nu)/2}G_{n+2}}
\hspace{1.0cm}n\,\,\,\mathtt{even}\\
\,\,\frac{\Gamma_{n+4}-\Gamma_{n}}{\Gamma_{n+3}-\Gamma_{n-1}}
%=\frac{(-1)^{(n+4-\nu)/2}G_{n+4}+(-1)^{(n+2-\nu)/2}G_{n+2}}{(-1)^{(n+3-\nu)/2}G_{n+3}+(-1)^{(n+1-\nu)/2}G_{n+1}}
\hspace{1.0cm}n\,\,\,\mathtt{odd}.
\end{cases}
\end{equation}
In the very large $n$-limit this becomes  
\begin{equation}
n\gg1:\hspace{0.5cm}\frac{Y_{n+4}-Y_{n}}{X_{n+4}-X_{n}}\,\,\,\rightarrow\,\,\,
\begin{cases}
\,\,\,-1/\gamma\hspace{1.0cm}
n\,\,\,\mathtt{even}\\
\,\,\,\hspace{0.5cm}\gamma\hspace{1.2cm} n\,\,\,\mathtt{odd}.
\end{cases}\hspace{1.5cm}
\end{equation}
\begin{proof}
Assuming $\Gamma_{n+4}\gg\Gamma_{n}$, $\Gamma_{n+3}\gg\Gamma_{n-1}$ (for $n\gg1$) and adopting relation \eqref{Gammanlarge} one has 
\begin{equation*}\frac{Y_{n+4}-Y_{n}}{X_{n+4}-X_{n}}=
\begin{cases}
	\,\,\frac{\Gamma_{n+3}-\Gamma_{n-1}}{\Gamma_{n+4}-\Gamma_{n}}
	\approx\frac{\Gamma_{n+3}}{\Gamma_{n+4}} \approx
	\frac{(-1)^{(n+3-1)/2}\gamma^{n+5}}{(-1)^{(n+4-0)/2}\gamma^{n+6}} = -\frac{1}{\gamma}
	\hspace{1.0cm}n\,\,\,\mathtt{even} 
	\\
	\,\,\frac{\Gamma_{n+4}-\Gamma_{n}}{\Gamma_{n+3}-\Gamma_{n-1}}
	\approx\frac{\Gamma_{n+4}}{\Gamma_{n+3}} \approx
	\frac{(-1)^{(n+4-1)/2}\gamma^{n+6}}{(-1)^{(n+3-0)/2}\gamma^{n+5}} = \gamma
	\hspace{1.35cm}n\,\,\,\mathtt{odd}.
\end{cases}
\end{equation*}
Assuming instead $\Gamma_{n+4}\ll\Gamma_{n}$, $\Gamma_{n+3}\ll\Gamma_{n-1}$ (for $n\gg1$) one similarly gets 
\begin{equation*}\frac{Y_{n+4}-Y_{n}}{X_{n+4}-X_{n}}=
	\begin{cases}
		\,\,\frac{\Gamma_{n+3}-\Gamma_{n-1}}{\Gamma_{n+4}-\Gamma_{n}}
		\approx\frac{\Gamma_{n-1}}{\Gamma_{n}} \approx
		\frac{(-1)^{(n-1-1)/2}\gamma^{n+1}}{(-1)^{(n-0)/2}\gamma^{n+2}} = -\frac{1}{\gamma}
		\hspace{1.0cm}n\,\,\,\mathtt{even} 
		\\
		\,\,\frac{\Gamma_{n+4}-\Gamma_{n}}{\Gamma_{n+3}-\Gamma_{n-1}}
		\approx\frac{\Gamma_{n}}{\Gamma_{n-1}} \approx
		\frac{(-1)^{(n-1)/2}\gamma^{n+2}}{(-1)^{(n-1-0)/2}\gamma^{n+1}} = \gamma
		\hspace{1.35cm}n\,\,\,\mathtt{odd}.
	\end{cases}
\end{equation*}
\end{proof}
This asymptotic behaviour immediately implies the following proposition.
\begin{proposition}\label{Prop1} %Prop. 3.1
For rectangular generalized Fibonacci spirals, the oblique asymptotes for the directional corner points with either even- or odd-numbered indices lie mutually orthogonal.
\end{proposition}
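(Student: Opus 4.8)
The plan is to read off the orthogonality directly from the two limiting slopes established in the computation immediately preceding the statement, using the elementary fact that two non-vertical lines with slopes $m_1$ and $m_2$ are mutually orthogonal exactly when $m_1m_2=-1$ (equivalently, their direction vectors $(1,m_1)$ and $(1,m_2)$ have vanishing scalar product).

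First I would recall that, by that preceding proof (valid in the large-$n$ limit under either dominance regime $\Gamma_{n+4}\gg\Gamma_n$ or $\Gamma_{n+4}\ll\Gamma_n$), the chord joining two consecutive same-direction corner points $P_n$ and $P_{n+4}$ has slope tending to $-1/\gamma$ when $n$ is even and to $\gamma$ when $n$ is odd. Since the directional index $j=n\bmod 4$ equal to $0$ or $2$ forces $n$ even (the lower-right and upper-left corner points) and $j=1$ or $3$ forces $n$ odd, the four families $\{P_{j+4k}\}_{k\in\N_0}$ lie asymptotically on lines that fall into exactly two parallel classes: an even class of limiting slope $m_{\mathrm{even}}=-1/\gamma$ and an odd class of limiting slope $m_{\mathrm{odd}}=\gamma$. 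It then remains only to note that $m_{\mathrm{even}}\,m_{\mathrm{odd}}=(-1/\gamma)\cdot\gamma=-1$, where one uses that $\gamma=\max\{\lvert\alpha\rvert,\lvert\beta\rvert,\lvert d\rvert\}$ is a finite strictly positive real (indeed $\alpha=(a+\sqrt{a^2+4b})/2>0$ by restriction~1 together with $a,b\in\R^+$), so that both slopes are finite and nonzero and the perpendicularity criterion applies. This is precisely the assertion of the proposition: any even-indexed asymptote is orthogonal to any odd-indexed one.

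I do not expect the orthogonality step itself to present difficulty; the only point requiring care — and the one I would flag — is the tacit claim that these four families of directional corner points actually possess oblique asymptotes, i.e.\ genuine straight lines that the point sequences approach. Expanding $P_{j+4k}$ through \eqref{Pn} and \eqref{Gamman}--\eqref{Gn}, one finds each coordinate to be of the form $c_1\alpha^{4k}+c_2\beta^{4k}+c_3d^{4k}+c_4$ with $j$-dependent constants; when one of $\lvert\alpha\rvert,\lvert\beta\rvert,\lvert d\rvert$ strictly dominates, both the slope $Y_{j+4k}/X_{j+4k}$ and the affine intercept $Y_{j+4k}-m\,X_{j+4k}$ converge and a bona fide asymptote exists, whereas in the outwinding case with two competing bases exceeding $1$ only the limiting direction is well defined (the intercept drifts, in accordance with the $n$-dependent quadruple of intersection points discussed later in the excerpt). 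For the present proposition, however, only the limiting slope of each family is needed, and that has already been pinned down, so nothing beyond the product computation above is required.
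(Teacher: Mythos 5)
Your argument is essentially identical to the paper's proof: both apply the perpendicularity criterion $s_1s_2=-1$ to the two limiting slopes $-1/\gamma$ and $\gamma$ obtained in the computation immediately preceding the proposition. Your additional remark on when a genuine affine asymptote (slope and intercept) exists, versus only a limiting direction in the doubly outwinding case, is a welcome clarification that the paper itself only addresses later, but it does not change the substance of the proof.
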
 
\begin{proof}
In general, the (complanary) graphs of two linear functions with slopes $s_1$ and $s_2$ are perpendicular with respect to each other, if the condition $s_1s_2=-1$ holds. This is satisfied with the two asymptotic slopes $-1/\gamma$ and $\gamma$.
\end{proof}  
As special case, the rectangular genuine Fibonacci spiral goes with asymptotic slopes through directional corner points that are equal to the Golden ratio and its negative inverse, i.e.,  $\gamma=\lim_{n\rightarrow\infty}F_{n+4}/F_{n+3}= \lim_{n\rightarrow\infty} F_{n+1}/F_{n}=\Phi=1.618\dots$ ($n$ odd) and $-1/\gamma=-\Phi^{-1}=-0.618\dots$ ($n$ even). For its exhibition, see Figure 2 (upper left panel). The two asymptotes drawn lead through the directional corner points with $n$=48 and 52 and with $n$=49 and 53 (representing some large-$n$-limit) and intersect at the point $P^\star=(-1/5;\,2/5)$. Herein, the coordinates are calculated by means of the formula given below in Proposition \ref{Prop2}.

As another special case, for rectangular Horadam spirals ($c=0$) the corresponding asymptotic slopes are $\gamma=\alpha$ and  $-1/\gamma=-1/\alpha$ (equivalent to an eigenvalue of the Horadam  matrix \eqref{Gmatrix1} refered to in the Appendix). 

Proposition \ref{Prop1} holds for both types of windings, i.e., for inwinding and outwinding spirals, however, with some particularities, as emphasized by the following proposition. 
\begin{proposition}\label{Prop2} %Prop. 3.2
For outwinding rectangular spirals with either $\gamma=\alpha>1>d>0$ or $\gamma=d>1>\alpha>0$ and for inwinding spirals ($\gamma<1$) the two perpendicular asymptotes for directional corner points cross at the point of intersection $P^\star=(X^\star;\,Y^\star)$ given by
\begin{equation}\label{Pstar}
P^*=\frac{1}{a^2+(b+1)^2} \Big( 
G_0 +b^2G_{-2}-\,c\,\frac{d^2+ad-b}{d^2+1} \,\,;\,\, 
G_1 +b^2G_{-1}-\,c\,d\,\frac{d^2+ad-b}{d^2+1} \Big);
\end{equation}
for inwinding spirals this coincides with the point of convergence $\lim_{n\rightarrow\infty} P_n$.
\end{proposition}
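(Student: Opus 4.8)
The plan is to push everything back onto the closed form \eqref{Gamman} for $\Gamma_n$ together with the large-$n$ behaviour \eqref{Gnlarge}. The key observation is that \eqref{Gamman} splits $\Gamma_n$ cleanly into a part that depends on $n$ only through the parity $\nu$ and a ``fluctuating'' part carrying the sign $(-1)^{(n-\nu)/2}$. Setting
\begin{equation*}
C_\nu:=\frac{1}{a^2+(b+1)^2}\left(G_\nu+b^2G_{\nu-2}-c\,\frac{d^2+ad-b}{d^2+1}\,d^{\nu}\right),\qquad
V_m:=G_m+b^2G_{m-2}-c\,\frac{d^2+ad-b}{d^2+1}\,d^{m},
\end{equation*}
equation \eqref{Gamman} reads precisely $\Gamma_n=C_\nu+(-1)^{(n-\nu)/2}V_{n+2}/(a^2+(b+1)^2)$. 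Using the Binet form \eqref{Gn}, the relations \eqref{alphabeta3}--\eqref{alphabeta4}, and the value of $p$, one finds the compact rewriting $V_m=(a^2+(b+1)^2)\big(\tfrac{A\alpha^{m}}{1+\alpha^2}+\tfrac{B\beta^{m}}{1+\beta^2}+\tfrac{p\,d^{m}}{1+d^2}\big)$, so that $V_m\sim(\mathrm{const})\,\gamma^{m}$ with a dominant-term coefficient that is nonzero in the generic case. Finally, comparing $C_0$ and $C_1$ with \eqref{Pstar}, and recalling that $G_{-1},G_{-2}$ there stand for the values \eqref{negind}, we obtain the identification $(C_0,C_1)=P^\star$; the whole statement will revolve around this.

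I would treat the inwinding case $\gamma<1$ first. Then $G_m\to0$ and $d^m\to0$, hence $V_m\to0$ by \eqref{Gnlarge}, so $\Gamma_n\to C_\nu$ and therefore, by \eqref{Pn}, $P_n\to(C_0,C_1)=P^\star$ along each residue class $n\bmod 4$; this is already the last assertion $\lim_{n\to\infty}P_n=P^\star$. For the asymptotes, the decomposition gives, up to the sign $(-1)^{\lfloor n/2\rfloor}$, that $P_n-P^\star$ equals $(V_{n+2},-V_{n+1})/(a^2+(b+1)^2)$ for even $n$ and $(V_{n+1},V_{n+2})/(a^2+(b+1)^2)$ for odd $n$. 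Since $V_m\sim(\mathrm{const})\,\gamma^{m}$, the direction of $P_n-P^\star$ tends to $(\gamma,-1)$ for even $n$ and to $(1,\gamma)$ for odd $n$; hence the secant line through two consecutive directional corner points tends to the line through $P^\star$ of slope $-1/\gamma$ for even indices and $\gamma$ for odd indices --- the slopes already found in Proposition~\ref{Prop1} --- and these two lines, mutually orthogonal by Proposition~\ref{Prop1}, cross at $P^\star$.

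For the two outwinding regimes $\gamma=\alpha>1>d>0$ and $\gamma=d>1>\alpha>0$ the numbers $V_m$ diverge, but the same identity applies: $P_n-P^\star$ is, up to sign, a fixed linear combination of the vectors $\alpha^{n}(\alpha,-1)$, $\beta^{n}(\beta,-1)$, $d^{n}(d,-1)$ for even $n$ (the analogous combination with $(\alpha,-1)$ replaced by $(1,\alpha)$, etc., for odd $n$), which by \eqref{Gnlarge} is asymptotically parallel to $(\gamma,-1)$, respectively $(1,\gamma)$. What remains is to upgrade this from ``approaches the direction of the asymptote'' to ``approaches the asymptote $L^\star$'', i.e. to show that the perpendicular deviation of $P_n$ from the line $L^\star$ through $P^\star$ of the relevant slope tends to $0$. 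A short computation expresses this deviation (for even $n$) as a multiple of $V_{n+2}-\gamma V_{n+1}$, in which the $\gamma^{n}$-terms cancel, leaving a quantity of order $r^{n}$, with $r$ the larger of the two non-dominant bases among $|\alpha|,|\beta|,|d|$. One then checks $r<1$: in the sub-case $\gamma=d>1>\alpha$ automatically, since $|\beta|=\alpha-a<\alpha<1$ forces $r=\max(\alpha,|\beta|)=\alpha<1$; in the sub-case $\gamma=\alpha>1>d$ one has $r=\max(|\beta|,d)$ with $d<1$ and $|\beta|=\alpha-a<1$ at least when $b<a+1$, a mild condition that may have to be appended to the hypothesis. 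Granted $r<1$, the deviation tends to $0$, $L^\star$ is the genuine asymptote, and, both asymptotes passing through $P^\star$, they intersect there.

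I expect this last estimate of the sub-dominant terms to be the main obstacle: it is precisely what separates the admissible outwinding regimes from the excluded one, $\alpha>1$ \emph{and} $d>1$, where two competing divergent bases prevent a fixed asymptote and instead yield the $n$-dependent quadruple of intersection points discussed in Section~3. Throughout, restrictions~1 and~2 ($\alpha\ne\beta$ and $d\notin\{\alpha,\beta\}$) must stay in force so that the Binet expansion \eqref{Gn} holds and no two of the bases coincide, and one should note that the dominant-term coefficient of $V_m$ --- namely $A$ when $\gamma=\alpha$ and $p$ when $\gamma=d$ --- has to be nonzero, which rules out only degenerate choices of the initial values $G_0,G_1$ (and, in the $\gamma=d$ sub-case, $c=0$).
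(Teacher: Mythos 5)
Your argument is correct and, for the outwinding half, takes a genuinely different route from the paper's sketch. The paper anchors two lines with the limiting slopes $-1/\gamma$ and $\gamma$ at consecutive corner points $P_n$ and $P_{n+1}$, solves for their intersection $(X^\star_{n,j};Y^\star_{n,j})$, and then lets $n\to\infty$ while replacing $G_n$ by its dominant term $A\alpha^n$ (resp.\ $p\,d^n$); you instead read $P^\star$ off a priori as the parity\-/independent part $(C_0,C_1)$ of the exact splitting $\Gamma_n=C_\nu+(-1)^{(n-\nu)/2}V_{n+2}/(a^2+(b+1)^2)$ and show that the perpendicular deviation of $P_n$ from the candidate asymptote through $P^\star$ is a multiple of $V_{n+2}-\gamma V_{n+1}$, in which the dominant base cancels identically. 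Your compact Binet form for $V_m$ (which does follow from \eqref{alphabeta1}--\eqref{alphabeta4} and \eqref{Gnpart}) makes that cancellation transparent, whereas the paper leaves it inside the phrase ``inserting expression \eqref{Gamman} and using the root relations straightforwardly provides the reclaimed point''; the inwinding half is essentially identical in both treatments. What your version buys, beyond rigor, is the explicit identification of the residual as $O(r^n)$ with $r$ the largest non\-/dominant base, and hence the observation that the sub\-/case $\gamma=\alpha>1>d$ secretly requires $\lvert\beta\rvert<1$, i.e.\ $b<a+1$: for instance $a=0.1$, $b=2$, $d=0.5$ gives $\alpha\approx1.47>1>d$ but $\lvert\beta\rvert\approx1.37>1$, the deviation diverges, and no fixed asymptote exists. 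The paper's own computation has the same hidden dependence (its $X^\star_{n,j}$ retain uncancelled $\beta^{\,n}$ terms) but does not flag it, so your extra condition is a genuine correction to the proposition's hypotheses rather than a defect of your argument; the nondegeneracy caveats $A\ne0$ (when $\gamma=\alpha$) and $p\ne0$ (when $\gamma=d$) are likewise left implicit in the paper.
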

\begin{proof}
(Sketch) The claimed coordinates for the point of convergence of \emph{inwinding} spirangles directly follow by inserting relation \eqref{Gammanlarge} (case $\gamma<1$) into equation \eqref{Pn}. Due to Proposition \ref{Prop1} this must be equal to the point of intersection. For \emph{outwinding} spirangles with $\gamma>1$ subsequent corner points $P_n=(\Gamma_n;\,\Gamma_{n-1})$ ($n$ even, and for the moment with directional index $j$=0) and $P_{n+1}=(\Gamma_{n+1};\,\Gamma_n$) ($n+1$ odd, $j$=1) lie in the very large-$n$-limit on orthogonal lines described by linear functions $y=-\gamma^{-1}\,x+(\Gamma_{n-1}+\gamma^{-1}\,\Gamma_n)$ and $y=\gamma\,x+(\Gamma_{n+1}-\gamma\,\Gamma_n)$, respectively. Hereby the slopes are chosen according to Proposition \ref{Prop1}. Equating the functions yields $x\equiv X_{n, j=0}^\star$ and $y\equiv Y_{n, j=0}^\star$ as given by $X_{n, j=0}^\star=\Gamma_n-\gamma(\gamma^2+1)^{-1} \,(\Gamma_{n+1}-\Gamma_{n-1}) =\Gamma_n-\gamma(\gamma^2+1)^{-1} \,G_{n+1}$ and $Y_{n,j=0}^\star=-\gamma^{-1}\,X_n^\star+(\Gamma_{n-1}+\gamma^{-1}\,\Gamma_n) =\Gamma_{n-1}+(\gamma^2+1)^{-1}\,G_{n+1}$. Adopting either $\gamma=\alpha>1>d$ or $\gamma=d>1>\alpha$  and taking $G_n\approx A\alpha^n$ or $G_n\approx pd^n$, respectively (due to relation \ref{Gammanlarge}), inserting expression \ref{Gamman} and using relations (2.6)-(2.8) straightforwardly provides the reclaimed point $P^\star$. Similarly, if one starts the inquiry with directional indices $j$=1,2,3 (instead of $j$=0 as above), one gets approximate points of intersection 
($X_{n, j=1}^\star,Y_{n,j=1}^\star)=
(\Gamma_n-(\gamma^2+1)^{-1} \,G_{n+2};\,
\Gamma_{n+1}-\gamma(\gamma^2+1)^{-1}\,G_{n+2})$, 
($X_{n, j=2}^\star,Y_{n,j=2}^\star)=
(\Gamma_{n+2}+\gamma(\gamma^2+1)^{-1} \,G_{n+3};\,
\Gamma_{n+1}-(\gamma^2+1)^{-1}\,G_{n+3})$, and 
($X_{n,j=3}^\star,Y_{n,j=3}^\star)=
(\Gamma_{n+2}+(\gamma^2+1)^{-1} \,G_{n+4};\,
\Gamma_{n+3}+\gamma(\gamma^2+1)^{-1}\,G_{n+4})$, respectively. Adopting the procedure as above for $j=0$, the conclusion arrived before concerning $P^\star$ remains the same in all cases.
\end{proof}

As with the rectangular Fibonacci spiral shown in Figure 2 (upper left panel), the asymptotes for the rectangular generalized Fibonacci spirals shown in the upper right panel (outwinding) and in the lower left panel (inwinding) are approximated by the blue-dashed lines going through points $P_{48}$ (and $P_{52}$) and $P_{49}$ (and $P_{53}$). In all examples the point of intersection $P^\star$ is calculatd with equation \eqref{Pstar} and marked by a blue circle. 

For differently outwinding rectangular spirals, i.e., those with both $\alpha>1$ and $d>1$, the slopes of lines through two neighbouring directional corner points still obey Proposition \ref{Prop1}. But in this case the mutual points of intersection $P_{n,j}^*=(X_{n,j}^\star;\, Y_{n,j}^\star)$ ---as given approximately in the proof above--- do not converge, with increasing value of $n$ they instead diverge away from $P^\star$. A zoom-in illustration is given in the lower right panel of Figure 2. The points of intersection (crossing dashed lines) are loacated only in the vicinity of $P^\star$ (single central circle). Their drift distances are, however, much smaller than the respective coordinates of the directional corner points: if the axes in this lower right panel would be scaled-out to the value of, say, $P_{48}=(17478;\,	-14798)$, all four asymptotes would pass $P^\star$ invisible close. The reader is invited to check this herself with the help of a graphical tool.

These mutually orthogonal lines are closely related to what may be called \textit{Holden lines}, because a similar feature of orthogonality was already recognized by Holden (1975) \cite{holden75} for the successive centers of quadratic tiles related to an outwinding genuine Fibonacci spiral. Hoggatt \& Alladi (1976) \cite{hoggatt76} modified the results of Holden by means of an inwinding rectangular Horadam spiral defined by the sequence $G_n(1,1,0,0;1,G_1)$ and found the intersection of the asymptotes of the directional corner points to correspond to the point of convergence for their spiral. Except for a rotation of the coordinate system, their observation seems to encounter a special case of our generalized approach.

Last but not least, we note that the length $L_n$ of a rectangular generalized Fibonacci spiral that starts at the origin of the coordinate system can be calculated by either one of the sum formulae \eqref{sumGk1} to \eqref{sumGk3}. In case of an inwinding spiral with infinitely many segments, the finite total length becomes
\begin{equation}
L_\infty = \frac{1}{a+b-1} \Bigg( (a-1)G_0-G_1
-\frac{cd^2}{1-d}\Bigg).
\end{equation}

\subsection{Arched spirals}

As can be inferred from Figure 1 and Table 2, the starting point of the $n$-th quarter-circle of an arched Fibonacci spiral has radius $F_n$ and the center is located at corner point $P_{n-2}$ (with the coordinates according to equation \ref{Pn}). Modestly more complicated, the $n$-th quarter-ellipse of an arched generalized Fibonacci spiral has unequally long semi-axes and depends on the even/odd distinction for $n$. As with arched genuine Fibonacci spirals, the center is identical with corner point $P_{n-2}$ for outwinding spirals. However, for inwinding spirals the center is chosen here to be identical with corner point $P_{n+4}$. (Alternatively, another inviting choice would have been $P_{n+6}$.) In order to draw the arc of the $n$-th quarter-ellipse or -circle, some $N+1$ \emph{spiral points} $P_{n,i}$, $i=0,1,2,\dots,N$,  with Cartesian coordinates $(x_{n,i};\,y_{n,i})$ on the arc are calculated and interpolated. We use parameter representation to have spiral points
\begin{equation}\label{Pniout}
P_{n,i}^{\mathtt{out}}=(x_{n,i};\,y_{n,i}) = 
\begin{cases}\,\,\,
\big(\Gamma_{n-2}^{}+G_n\cos \phi_{n,i}\,;
\,\,\,\,\,\Gamma_{n-3}^{}+(G_{n+1}-G_{n-1})\sin \phi_{n,i}\big) \hspace{0.7cm}n\,\mathtt{even}\\ \,\,\,
\big(\Gamma_{n-3}^{}+(G_{n+1}-G_{n-1})\cos \phi_{n,i}\,;
\,\,\,\,\,\Gamma_{n-2}^{}+G_n\sin \phi_{n,i}\big)  \hspace{0.7cm}n\,\mathtt{odd},
\end{cases}
\end{equation}
for \textit{outwinding} spirals and 
\begin{equation}
P_{n,i}^{\mathtt{in}}=(x_{n,i};\,y_{n,i}) = \begin{cases}\,\,\,
\big(\Gamma_{n+4}^{}+\lvert G_{n+4}-G_{n+2}\rvert\cos \phi_{n,i}\,;
\,\,\,\,\,\Gamma_{n+3}^{}+G_{n+3}\sin \phi_{n,i}\big) \hspace{0.5cm}n\,\mathtt{even}\\ \,\,\,
\big(\Gamma_{n+3}^{}+G_{n+3}\cos \phi_{n,i}\,;
\,\,\,\,\,\Gamma_{n+4}^{}+\lvert G_{n+4}-G_{n+2}\rvert\sin \phi_{n,i}\big)  \hspace{0.5cm}n\,\mathtt{odd},\label{Pni}
\end{cases}
\end{equation}
for \textit{inwinding} spirals, in both cases with attributed polar angles 
\begin{equation}
\phi_{n,i}=\bigg(n+\frac{i}{N}\bigg)\frac{\pi}{2}\,\,\,\in\,\bigg[ n\frac{\pi}{2}, \,(n+1)\frac{\pi}{2} \bigg],\hspace{1cm}i=0,1,2,\dots,N.\label{phiniin}
\end{equation}
For our figures, we choose $N=60$. By definition, the \emph{starting points} of the first and of the second arc are $P_{1,0}=(0;\,G_1)$ and $P_{2,0}=(G_0-G_2;\,0)$, respectively. (This may be modified in another study.) For $n\ge3$  and with $\phi_{n,0}=n\,\pi/2$ ($i=0$ in equation \ref{Pni}), the starting point of the $n$-th arc is 
\begin{equation}\label{Pinterpol0}
P_{n,0}=(x_{n,0};\,y_{n,0}) = \begin{cases}\,\,\,
\big(\Gamma_{n-2}^{}+G_n (-1)^{n/2}\,;
\,\,\,\,\,\Gamma_{n-3}^{}\big)\,\,
\hspace{0.5cm}=\big(\Gamma_{n}^{};\,\Gamma_{n-3}^{}\big) \hspace{0.7cm}n\,\mathtt{even}\\ 
\,\,\,\big(\Gamma_{n-3}^{}\,;\,\,\,\,\,\, \Gamma_{n-2}^{}+G_n(-1)^{(n-1)/2} \big)
=\big(\Gamma_{n-3}^{};\,\Gamma_{n}^{}\big)  \hspace{0.68cm}n\,\mathtt{odd},
\end{cases}
\end{equation}
where the rear equalities directly follow from equation \eqref{diffrel}.

The assemblage of arcs of subsequent quarter-ellipses composes an \emph{arched generalized Fibonacci spiral} in the plane. Some example spirals, including the popular planar Fibonacci spiral, are shown in Figure 2 (red curves). 

Some further observations concerning large values of $n$ are as follows:
\begin{itemize}
	\item As already stated above, for large $n$ the value of $G_n=A\,\alpha^n+B\,\beta^n+p\,d^{n}$ ($\alpha$, $\beta$, $d$ being distinct) is dominated by  $\gamma\,$=$\,\max\{\lvert\alpha\rvert, \lvert\beta\rvert, \lvert d\rvert\}$, i.e., $G_n\propto \gamma^n$. 
	\item The \emph{ellipticity} of the $n$th arc (with $n$ even) is defined by  $\epsilon_{out}=1-(G_{n+1}-G_{n-1})/G_n$ for outwinding spials and $\epsilon_{in}=1-G_{n+3}/\rvert G_{n+4}-G_{n+2}\lvert$ for inwinding spirals. For large $n$, the ellipticities approach constant values $1-\gamma$ and $1-1/\gamma$, respectively. A similar statement can be made for the case with $n$ being odd.	
	\item For large $n$, the spiral points are approximated with increasing accuracy by
\begin{equation}\label{Pnilargen}
n\,\,\mathtt{large}:\,\,\,P_{n,i}\approx \begin{cases}\,\,\,
\big(\Gamma_{n-2}^{}+\gamma^n\cos \phi_{n,i}\,;
\,\,\,\,\,\Gamma_{n-3}^{}+\gamma^{n+1}\sin \phi_{n,i}\big) \hspace{0.7cm}n\,\mathtt{even}\\ \,\,\,
\big(\Gamma_{n-3}^{}+\gamma^{n+1}\cos \phi_{n,i}\,;
\,\,\,\,\,\Gamma_{n-2}^{}+\gamma^n\sin \phi_{n,i}\big)  \hspace{0.7cm}n\,\mathtt{odd}.
\end{cases}
\end{equation}
For Fibonacci numbers $F_n=G_n(1,1,0,0;0,1)$ or Lucas numbers $L_n=G_n(1,1,0,0;2,1)$, both with $\gamma=\Phi\approx 1.618$ being equal to the golden ratio, this is known to approximately become a particular \emph{geometric spiral} (a.k.a. logarithmic or Bernoulli spiral). Such a spiral is investigated in, e.g., \cite{engstrom87}.
\end{itemize}
\begin{figure*}	\label{Fig_3}
	\centering 
	\includegraphics[width=0.45\textwidth]{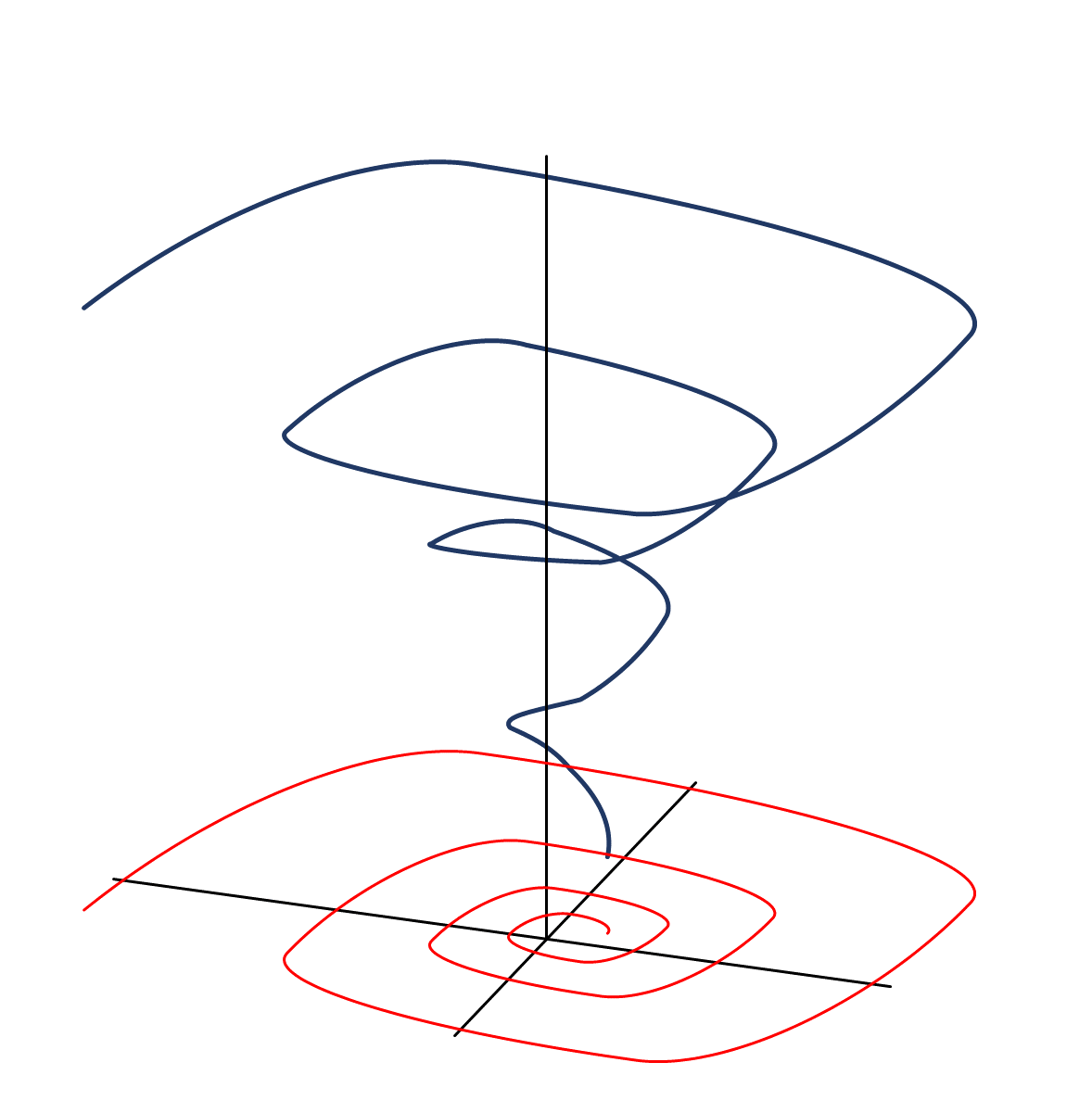}
	\includegraphics[width=0.45\textwidth]{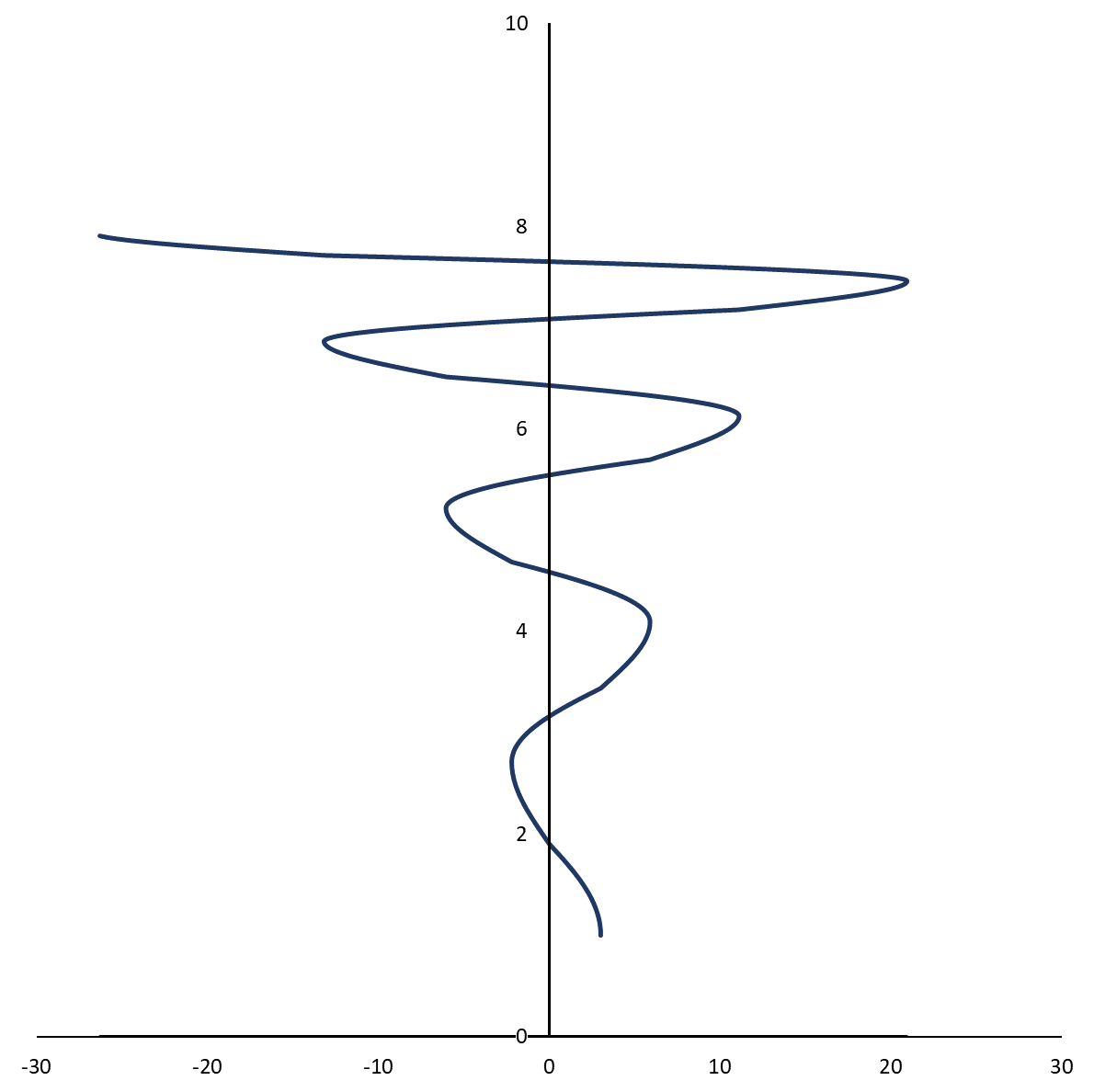}
	\caption{\footnotesize Spacial generalized Fibonacci spiral.
		\textit{Left}: Perspective view of the outwinding spiral shown in Figure 2 (upper right panel, reproduced here in red), with the $z$-coordinate given by the cumulated inputs according to equation \eqref{z3}. \textit{Right}: Same spiral as shown in the panel to the left, but seen projected onto the $x$-$z$-plane in a frontal view. For $n\rightarrow \infty$ the height of the spiral will converge to the upper limit $z_\infty=c/(1-d)=10$ (with $c=1,\,d=0.9$). \normalsize}
\end{figure*}

\subsection{Spacial representation}

We finally add a third dimension to \emph{arched} spirals to get interpolated points with spacial coordinates $P_{n,i}=(x_{n,i};\,y_{n,i};\,z_{n,i})$, with $x_{n,i}$, $y_{n,i}$ still calculated according to equations \eqref{Pniout}-\eqref{Pinterpol0}. For the third coordinate $z_{n,i}$, $i=0,1,2,\dots,N-1$, one may simply choose some linear increase, i.e., 
\begin{equation}\label{z1}
z_{n,i}= n+\frac{i}{N},
\end{equation} 
where $N$ is the number of points used to interpolate each quarter-ellipse. Or one may visualize the exponential input that appears in recursion relation \eqref{RR} by exhibiting either the local contribution
\begin{equation}\label{z2}
z_{n,i} = c\,d^{\,n+i/N}
\end{equation}
or the cumulated inputs 
\begin{equation}\label{z3}
z_{n,i} = c\,\frac{d^{\,n+1+i/N}-1}{d-1}\,, 
\end{equation}
where for $i=0$ one has in the latter case $z_{n,i}=c\,S_n(d)$, $S_n(d)$ being the partial sum formula for geometric progression (see equation \ref{Sumnd}). Alternatively, replacing $c$ by $p$ will provide an altered information. A typical \emph{spacial spiral} is exhibited in Figure 4, seen under distinct perspectives. It is but the outwinding arched spiral already shown in Figure 2 (upper right panel), with the $z$-coordinate calculated according to the rule given by equation \eqref{z2}. Because $G_n$ is growing with increasing index number $n$, the spiral arms get exponentially broader, while the height of this spiral approaches an upper limit (see Figure caption). 

\begin{figure*}	\label{Fig_Gt}
	\centering 
	\includegraphics[width=0.45\textwidth]{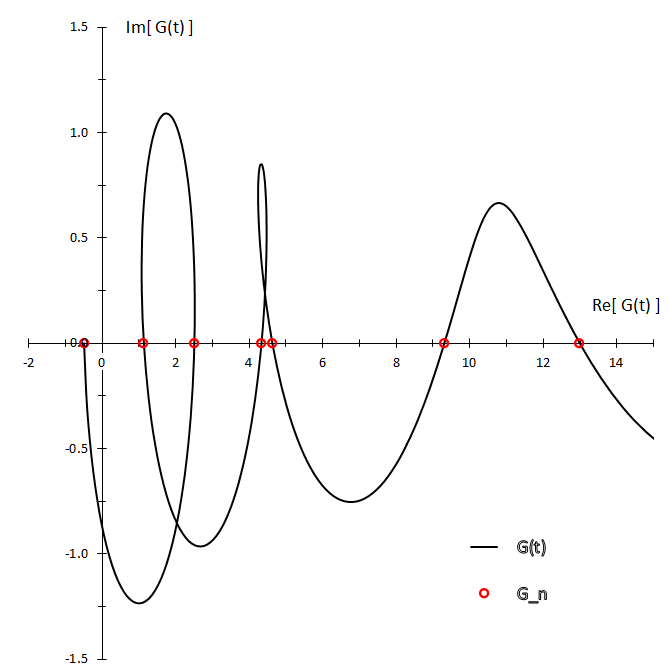}
	\includegraphics[width=0.45\textwidth]{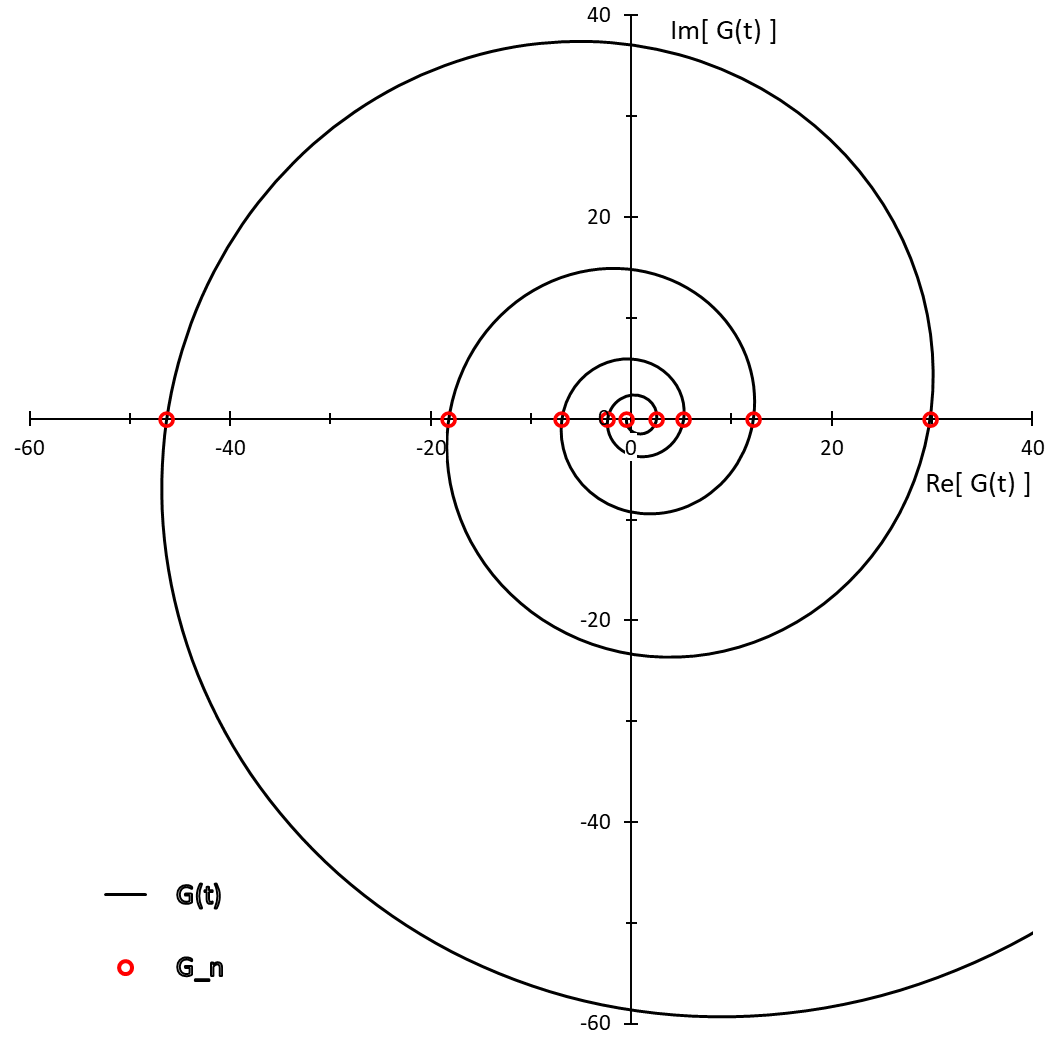}
	\includegraphics[width=0.45\textwidth]{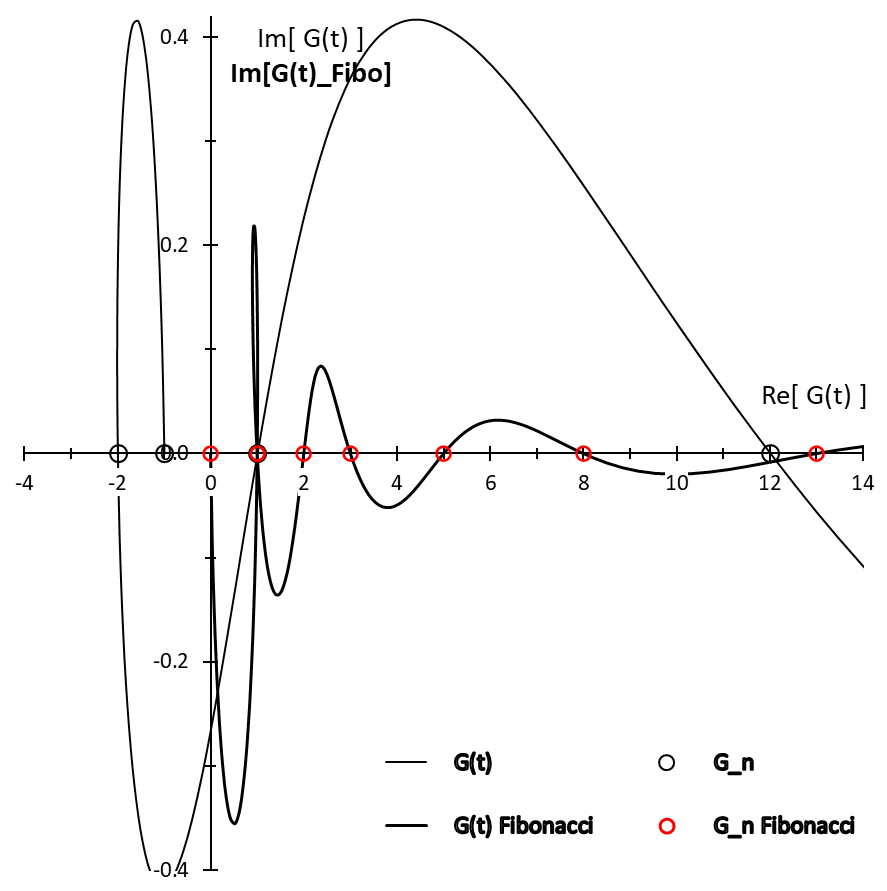}
	\includegraphics[width=0.45\textwidth]{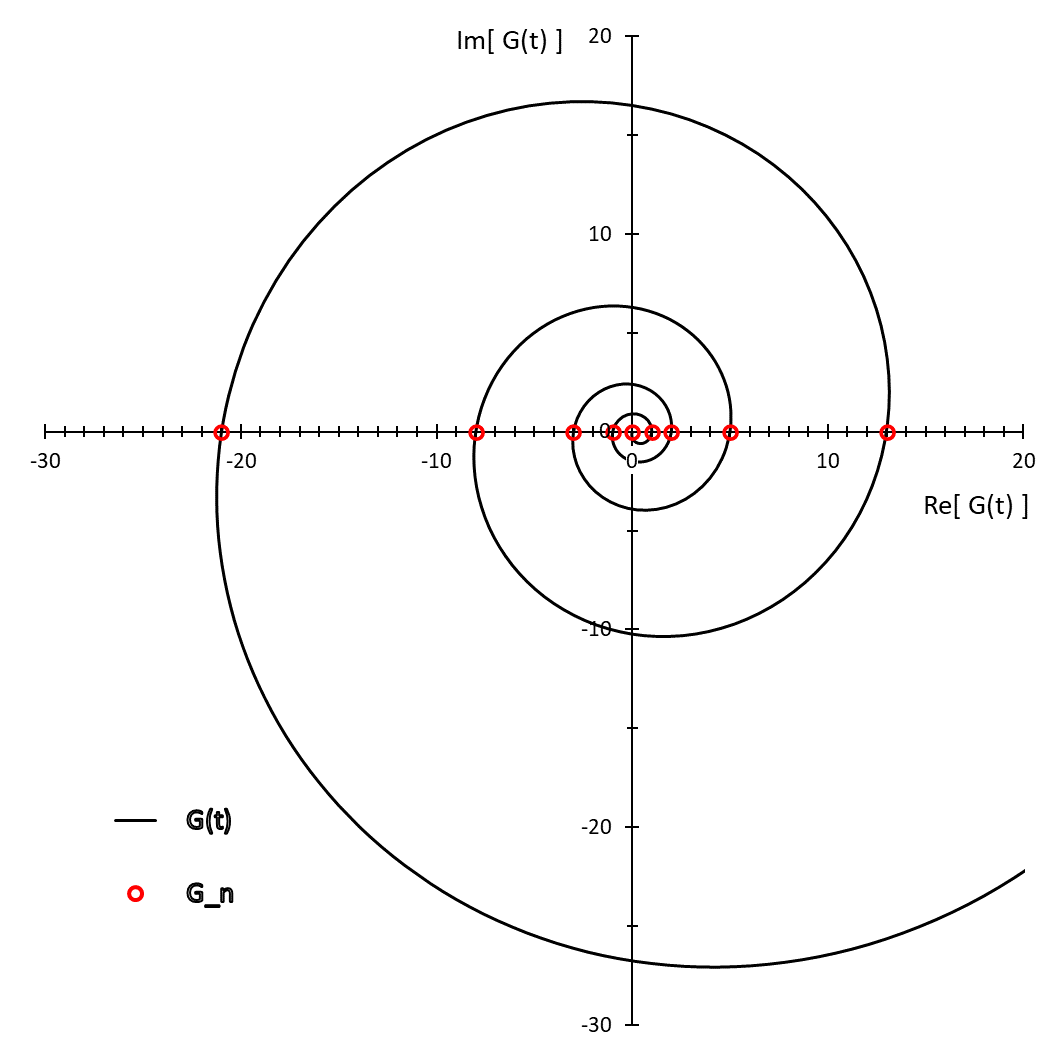}
	\caption{\footnotesize Illustrating graphs in the Gaussian plane for the complex valued function $G(t|a,b,c,d;\,G_0,G_1)$ with real argument $t$ (according to equation \ref{Gtcomplex}). The small circles at the intersection with the axis of the real part, i.e., at the zeros of the function $\mathtt{Im}[G(t)]$, provide the values of the sequence $\{G_n(a,b,c,d;G_0,G_1)\}$. This occurs for all integer values of the variable, i.e., for $t=n \in \N_0$. \emph{Upper left:} Oscillatory graph for $G(t| 0.7,1.4,0.3,0.5;\,-0.5,2.5)$, corresponding to $0<b/\alpha<1$.	\emph{Upper right:} Spiral graph with $G(t| -0.7,1.4,0.3,0.5;-0.5,2.5)$, corresponding to $b/\alpha>1$. \emph{Lower left:} Examples with integer parameter values, once for $G(t|2,3,2,2;-1,-2)$ (thin line, $b/\alpha>1=1$) and once with genuine Fibonacci number parameter values $G(t|1,1,0,0;0,1)$ (thick line, $0<b/\alpha<1$). In the former case, the amplitude remains constant ($B\approx0.417$ in equ. 4.3), in the latter case there is damped oscillatory behaviour. The integer Fibonacci numbers appear at the intersection of the graph with the real part axis, i.e., $F_n=G(t|t=n)$. \emph{Lower right:} Spiraling graph for $G(t|-1,1,0,0;0,1)$ (corresponding to $b/\alpha>1$), including \emph{alternating} Fibonacci numbers at the zeros of the imaginary part. \normalsize}
\end{figure*}

\section{Generalized Fibonacci spirals with real argument}

So far, the index $n$ has been an integer number producing a real valued sequence $\{G_n\}$. Replacing now the index $n$ by some real argument $t$, we have the complex valued function
\begin{equation}
G(t)=G(t|a,b,c,d;G_0,G_1)\,=\,A\,\alpha^t+B\,\beta^t+p\,d^t,\hspace{0.5cm}t \in \mathbb{R}_0^+ \label{Gt}
\end{equation}
with $A$ and $B$ still given by equations \eqref{A311} and \eqref{B311}, respectively. Indeed, applying Euler's formula, i.e., $(-1)^t=\exp(i\pi\,t)=\cos(\pi\,t)+i\,\sin(\pi\,t)$, and reminding $\alpha\beta=-b$, one may express
\begin{eqnarray}
G(t)&=& A\,\alpha^t+B\,(-b/\alpha)^t+pd^t \\ 
&=&\Big( A\,\alpha^t+B\,\cos(\pi t)(b/\alpha)^t+pd^t\Big)+\,\,i\,\,\Big( B\,\sin(\pi t)(b/\alpha)^t\Big)\label{Gtcomplex}\\
&=& \mathtt{Re}[G(t)]+i\,\,\mathtt{Im}[G(t)].
\end{eqnarray}
This is analogous to the treatment by Horadam \& Shannon (1988) \cite{horadam88} for Fibonacci and Lucas numbers or by Horadam (1988) \cite{horadam88b} for Jacobsthal and Pell numbers. For a similar, but more recent approach, see, e.g., \cite{chandra} and \cite{ozvatan}. Drawing the generalized Fibonacci function $G(t)$ in the Gaussian plane creates either oscillatory or spiral curves, depending whether the parameter $b/\alpha$ obeys $0<b/\alpha\le1$ or $b/\alpha>1$, respectively. The former criterium results in the gradual disapperance of the trigonometic terms, while the latter criterium guarantees for nonvanishing contributions of both the cosinus term and the sinus term, providing the circularity of the graph. At the zeros of $\mathtt{Im}[G(t)]$ the equality $G(t)=G_n$ holds (with $G_n$ as introduced in Section 2), because for all integer values of $t$ one has $\cos(\pi\,t)=\pm1$ and $\sin(\pi\,t)=0$. Hence, as a \textit{corollary}, 
\begin{equation}
G(t) = G_n\hspace{0.5cm}\forall t \in \N_0.
\end{equation}
Figure 4 illustrates this kinship and the general behaviour of the graphs for pairwise nearly equal sets of parameter values. See figure caption for some details. The oscillating "Fibonacci curve" originally shown in \cite{horadam88} is depicted, too (lower left panel, thick line with red circles for the Fibonacci numbers). We finally note that some spacial representation of $G(t)$ could be managed in a similar manner as in Section 3.4. 

\section{Conclusions}

We draw generalized Fibonacci spirals, based on analytic solutions of the recurrence relation $G_n=a\, G_{n-1}+b\, G_{n-2}+c\, d\,^n$. The underlying generalized Fibonacci numbers $G_n$ are equivalent to transformed Horadam numbers $H_n=G_n-pd^n$. Our inquiry is restricted to positive real initial values $G_0$ and $G_1$ and coefficients $a$, $b$, $c$, and $d$, that additionally satisfy the conditions $a^2+4b>0$ (restriction 1) and $d^2-a\,d-b = (d-\alpha)(d-\beta)\ne 0$ (restriction 2). Complex solutions for $G_n$ (for $a^2+4b<0$) or degenerate cases with $d=\alpha$ or $d=\beta$, as well as allowing for negative parameter values could be dealt with in a follow-up study. The principal coordinates used to draw the spirals correspond to finite sums of alternating even- or alternating odd-indexed terms $G_{n}$ and are given in closed-form. For this closed-form solution $\Gamma_n$ (equation \ref{Gn}), two proofs are given (see Appendix), one based on the Moivre-Binet-form of $G_n$ and a shorter one based on the transformed Horadam numbers $H_n=G_n-pd^n$ that adopts the substitution method suggested in the text. For rectangular spirals composed of straight line segments, the even-indexed and the odd-indexed directional corner points asymptotically lie on mutually orthogonal oblique lines. We calculate the points of intersection and show them in the case of inwinding spirals to coincide with the calculable point of convergence. In the case of outwinding spirals, an $n$-dependent quadruple of points of intersection may form. We illustrate the situation and provide approximate coordinate calculations. For arched spirals, interpolation between principal coordinates is performed by means of arcs of quarter-ellipses. A simple three-dimensional representation that visualizes the exponential input $c\, d\,^n$ is exhibited, too.  Other choices can be thought of, and extensions up to spiraling surfaces in 3D space seem attractive. The continuation of the discrete sequence $\{G_n\}$ to the complex-valued function $G(t)$ with real argument $t$$\in$$\R$ exhibits spiral graphs and oscillating curves in the Gaussian plane, thereby subsuming the values $G_n$ for $t$ $\in$ $\N_0$ as the zeros.

Besides, we retrieve within our framework the Shannon identity (i.e., a generalization of the Tagiuri product difference Fibonacci identity) and suggest the substitution method in order to find a variety of other identities and summations related to $G_n$. This may pose problems suitable for the problem's section of The Fibonacci Quarterly \cite{FQ}. The Shannon identity for Horadam numbers in particular could provide helpful in looking for more and higher-order product identities for $H_n$ and hence for $G_n$. This would be in the spirit of former inquiries like those of Melham \cite{melham03}, \cite{melham11} and many others (e.g., \cite{fair05}) in the case of Fibonacci and Horadam numbers. For that purpose, it could be advantageous to rely on the matrix representation of $G_n$ in terms of transformed Horadam numbers (as provided in the Appendix), and to adopt methods as outlined in, e.g., \cite{wadill}, \cite{johnson}, \cite{cerda13}. Finally, it is to hope that the generalized Fibonacci spirals prove good for some artistic, technical, natural, or even extra-terrestrial applications.

\section{Appendix}

\subsection{Matrix representation}

Because the numbers $G_n$ are but transformed Horadam numbers $H_n$ (equ. \ref{Hn}), the system matrix for recurrent (real valued) Horadam numbers 
\begin{equation}\label{Gmatrix1}
\mathbb{H}=\begin{pmatrix}
a & b  \\                                              
1 & 0  \\                                            
\end{pmatrix}
\end{equation}
---called R-matrix in \cite{wadill}--- does represent recurrence relation \eqref{RR}, too, according to
\begin{equation}\label{Gmatrix2}
\begin{pmatrix}
H_n\\                                H_{n-1}\\                                            
\end{pmatrix}=\begin{pmatrix}
G_n-pd^n\\                                G_{n-1}-pd^{n-1}\\                                            
\end{pmatrix}=\mathbb{H}\,\begin{pmatrix}
G_{n-1}-pd^{n-1}\\                                G_{n-2}-pd^{n-2}\\                                            
\end{pmatrix}
=\mathbb{H}^{n-1}\,\begin{pmatrix}
G_1-pd\\                                G_0-p\\                                            
\end{pmatrix}
=\mathbb{H}^{n-1}\,\begin{pmatrix}
H_1\\                                H_0\\                                            
\end{pmatrix}.
\end{equation}
Its eigenvalues are $\alpha$ and $\beta$ as given by equation \eqref{lambda12} and its eigenvectors are $\begin{pmatrix}\alpha\\ 1 \end{pmatrix}$ and $\begin{pmatrix} \beta\\ 1 \end{pmatrix}$. Defining matrices $D=\begin{pmatrix}\beta& 0\\0 & \alpha \end{pmatrix}$ and $T=\begin{pmatrix}\beta& \alpha\\1 & 1 \end{pmatrix}$, with $D^n=\begin{pmatrix}\beta^n& 0\\0 & \alpha^n \end{pmatrix}$ and  $T^{-1}=\frac{1}{\alpha-\beta} \begin{pmatrix}-1& \alpha\\1 & -\beta \end{pmatrix}$, allows for diagonalization according to $\mathbb{H}=TDT^{-1}$. Hence, $\mathbb{H}^{n}=TDT^{-1}TDT^{-1}\dots TDT^{-1}=TD^{n}T^{-1}$ or
\begin{eqnarray}
\mathbb{H}^{n}&=&\frac{1}{\alpha-\beta}\begin{pmatrix}
\alpha^{n+1}-\beta^{n+1}& -\beta\alpha^{n+1}+\alpha\beta^{n+1}\\
\alpha^{n}-\beta^{n}&-\beta\alpha^{n}+\alpha\beta^{n}\end{pmatrix}
\label{Gmatrix3}\\
& = & \frac{1}{\alpha-\beta}\begin{pmatrix}
\alpha^{n+1}-\beta^{n+1}& b\,\big(\alpha^{n}-\beta^{n}\big)\\
\alpha^{n}-\beta^{n}& b\,\big(\alpha^{n-1}-\beta^{n-1}\big)\end{pmatrix}
\label{Gmatrix4}\\
&=&\begin{pmatrix}h_{n+1}& b\,h_n\\ h_n& b\,h_{n-1} \end{pmatrix},
\label{Gmatrix5}
\end{eqnarray}
where $h_n=G_n(a,b,0,0;0,1)=(\alpha^{n}-\beta^{n})/(\alpha-\beta)$ are the conjoined \textit{fundamental Horadam numbers} (i..e., with initial values 0 and 1). Equation \eqref{Gmatrix2} can easily be recast by means of equation \eqref{Gmatrix3}. For the Fibonacci numbers $F_n=G_n(1,1,0,0;0,1)$, for example, the last equation readily reduces to the well-known matrix 
\begin{equation}
\mathbb{F}^{n}=
\begin{pmatrix}F_{n+1}& F_n\\ F_n& F_{n-1} \end{pmatrix}.
\end{equation} 
Matrix representations are frequently used in the literature to produce summation identities and to establish other properties in relation to recurrent sequences (e.g., recently, \cite{catalani}, \cite{johnson}, \cite{koshy}). For example, the first component in equation \eqref{Gmatrix2} implies the \textit{decomposition}
\begin{equation}
G_n = h_n\,G_1+bh_{n-1}\,G_0+p\big( d^n-dh_n-bh_{n-1}\big). 
\end{equation} 
We do not pursuit the matrix method any further here.

\subsection{Some partial sums related to $\{G_n\}$} %6.2

The main ingredients for the derivations of the summation formulae given below and in the next subsection are repeatedly the recurrence relation \eqref{RR}, the Moivre-Binet-type solution formula \eqref{Gngen}ff, and the partial sum formula for geometric progression with a factor $d\in \mathbb{R}$, i.e.,
\begin{equation}\label{Sumnd}
S_n(d) \equiv \sum_{k=0}^{n} d^{k}=
\begin{cases}
\,\frac{d^{n+1}-1}{d-1}\hspace{0.42cm}(d\ne 1)\\
\,\,\,n+1\hspace{0.52cm}(d=1)
%\\ \hspace{0.55cm}0\hspace{0.88cm}(d=0) to be ignored
\end{cases}
\end{equation} 

Proceeding either as exemplified in Horadam (1965) \cite{horadam65}, section 3, by repeated use of the recurrence relation or by insertion of the Moivre-Binet-type solution for $G_n$ one straightforwardly arrives at 
\begin{eqnarray}
\sum_{k=0}^{n} \,G_{k} 
&=& \frac{1}{a+b-1} \Bigg( (a-1)G_0-G_1+b\,G_n+G_{n+1}
-cd^2\,S_{n-1}(d)\Bigg) \label{sumGk1}\\ 
&=& \frac{1}{a+b-1} \Bigg( (a-1)(G_0-G_{n+1})-G_1+G_{n+2}
-cd^2\,S_{n}(d)\Bigg) \label{sumGk2}\\
&=& \frac{(a-1)H_0-H_1+b\,H_n+H_{n+1}}{a+b-1}
+p S_{n}(d). \label{sumGk3}
\end{eqnarray}
Herein $H_n=G_n-pd^n$ (equ. \ref{Hn}). The three alternative formulations are given for ease of comparability with results of different approaches. For example, replacing $n\rightarrow n-1$ in relation \eqref{sumGk2} one reproduces proposition 3$\,$i) in Phadte \& Valaulikar (2016, setting their $A=cd^2$) \cite{phadte16}. For $p=0$ one basically recovers the formula originally presented in \cite{horadam65}. Therefore, adopting the substitution method suggested in Section 2.2 and directly starting with identity \eqref{sumGk3} (with $p=0$) readily provides some sum formula for $G_n-pd^n$ and hence for $G_n$.  

Similarly, the corresponding result for cumulated differences becomes
\begin{eqnarray}
\sum_{k=0}^{n} (-1)^k\,G_{k} 
&=& \frac{1}{a-b+1} \bigg( (a+1)G_0-G_1+(-1)^n \Big(  G_{n+1}-b\,G_n\Big)+cd^2\,S_{n-1}(-d)\bigg) \label{sumGk4}\nonumber\\
& & \\
&=& \frac{1}{a-b+1} \bigg( (a+1)\Big(G_0+(-1)^nG_{n+1}\Big)-G_1+(-1)^{n+1}G_{n+2} \nonumber\\& &\hspace{8.5cm}+cd^2 \,S_{n}(-d)\bigg)\label{sumGk5}\\
&=&\frac{(a+1)H_0-H_1+(-1)^n\Big(H_{n+1}-b\,H_n\Big)}{a-b+1}
+p S_{n}(-d). \label{sumGk6}
\end{eqnarray}
For some more partial sums of linear or quadratic terms regarding sequence ${G_n}$, see \cite{phadte16}. The distinct summation relation given by our equations \eqref{SumAlt}-\eqref{Gamman} is, however, original work by ours and proven below.

\subsection{Proof for $\Gamma_{n}$}

In Section 6.3.1, we prove the sum formula \eqref{SumAlt}f for alternating even-indexed or alternating odd-indexed generalized Fibonacci numbers by means of the Moivre-Binet-form solution of $G_n$. In Section 6.3.2, in a second proof of equation \eqref{SumAlt}f, we presume availability of the sum formula for the special case of Horadam numbers (equation \ref{GammaHora}) and proceed by means of the substitution method suggested in Section 2. 

\subsubsection{Relying on the  Moivre-Binet-form solution.}  For a proof of equation \eqref{SumAlt}f we distinguish in advance the summations both according to $n$ being even or odd and with respect to the further constraint $n\,\mathtt{mod}\,4 =\,0$ or 2 (if $n$ even) or $n\,\mathtt{mod}\,4 =\,$1 or 3 (if $n$ odd). This latter distinction seems necessary due to the changing occurence of equal or unequal numbers of summands with positive or with negative signs (cf. Table 1).

The formula for the case $n$ even and $n\,\mathtt{mod}\,4=0$ (i.e., $n=4,8,12,\dots$) derives in detail as follows (with notations $\cdots$ and $(..)$ abbreviating some similar treatments or expressions, respectively):
\begin{equation}
\sum_{k=0}^{n/2} (-1)^{k}\,G_{2k} =
\sum_{k=0}^{\frac{n}{4}} \,G_{4k} - 
\sum_{k=0}^{\frac{n-4}{4}} \,G_{4k+2}\hspace{7.5cm}
\end{equation}
\begin{eqnarray}
&=&A\sum_{k=0}^{\frac{n}{4}}(\alpha^4)^k +B\sum_{k=0}^{\frac{n}{4}}(\beta^4)^k+p\sum_{k=0}^{\frac{n}{4}}(d^4)^k-A\alpha^2\sum_{k=0}^{\frac{n-4}{4}}(\alpha^4)^k -B\beta^2\sum_{k=0}^{\frac{n-4}{4}}(\beta^4)^k -pd^2\sum_{k=0}^{\frac{n-4}{4}}(d^4)^k   
\nonumber\\
&=& A\frac{\alpha^{n+4}-1}{\alpha^4-1} +B\,\dots+p\,\dots 
 -A\alpha^2\frac{\alpha^{n}-1}{\alpha^4-1} -B\,\dots-p\,\dots 
\nonumber\\
&=&A\frac{\alpha^{n+2}(\alpha^{2}-1)+(\alpha^{2}-1)}{(\alpha^2-1)(\alpha^2+1)} +B\,\dots+p\,\dots \nonumber \\
&=&A\frac{\alpha^{n+2}+1}{\alpha^2+1} +B\frac{\beta^{n+2}+1}{\beta^2+1}
+p\frac{d^{n+2}+1}{d^2+1}\\
&=&A\frac{(\alpha^{n+2}+1)(1+\beta^2)(d^2+1)} {(\alpha^2+1)(\beta^2+1)(d^2+1)} +B\,\dots+p\,\dots 
\nonumber\\
&=&\frac{A \bigg( \alpha^{n+2} +(\alpha\beta)^2\alpha^n+ 1+\beta^2\frac{\alpha^2}{\alpha^2} \bigg)(d^2+1)}{(..)(..)(..)}
+\frac{B \bigg( \beta^{n+2} +(\alpha\beta)^2\beta^n+ 1+\alpha^2\frac{\beta^2}{\beta^2} \bigg)(d^2+1)}{(..)(..)(..)}
\nonumber \\
& &+\frac{p \bigg( d^{n+2} +(\alpha\beta)^2d^n+ 1+\alpha^2\beta^2\frac{1}{d^2} \bigg)(d^2+1)}{(..)(..)(..)}
-\frac{p \bigg( d^{n+2} +(\alpha\beta)^2d^n+ 1+\alpha^2\beta^2\frac{1}{d^2} \bigg)(d^2+1)}{(..)(..)(..)}
\nonumber 
\end{eqnarray}
\begin{eqnarray}
& &+\frac{p \bigg( d^{n+2} +1 \bigg)(1+\alpha^2)(1+\beta)^2}{(..)(..)(..)}
\,\,\,\, //\mathtt{equ.}\,\eqref{alphabeta1}, \eqref{alphabeta2},\eqref{alphabeta3}
\nonumber \\
&=&\frac{\bigg( G_{n+2}+b^2G_n+G_0+b^2G_{-2}\bigg)(d^2 +1)}{(..)(..)(..)}
-\,\frac{p(d^n+d^{-2})\bigg(d^4-(\alpha^2+\beta^2)d^2+(\alpha\beta)^2\bigg)}{(..)(..)(..)}
\nonumber \\
&=&\frac{G_{n+2}+b^2G_n+G_0+b^2G_{-2}}{a^2+(b+1)^2}
-\frac{p\bigg(d^4-(a^2+2b)d^2+b^2\bigg)(d^n+d^{-2})}{(a^2+(b+1)^2)(d^2+1)}\hspace{0.7cm} //\mathtt{equ.}\,\eqref{Gnpart}
\nonumber \\
&=& \frac{1}{a^2+(b+1)^2} \bigg\{  G_{n+2}+b^2 G_n  +G_0 +b^2G_{-2}-c\,\frac{d^2+ad-b}{d^2+1} \,
\Big( \,d^{n+2}+1\, \Big)\,\bigg\}.
\label{nmod4eq0} 
\end{eqnarray}
With the last step one observes the polynom division $(d^4-(a^2+2b)d^2+b^2)/(d^2-ad-b)=d^2+ad-b$. 

Omitting the details, in a very similar way as above one obtains for the case $n$ even and $n\,\mathtt{mod}\,4=2$ (i.e., $n=6,10,14,\dots$):
\begin{equation} %case n mod 4  = 0 ------------------------
\sum_{k=0}^{n/2} (-1)^{k}\,G_{2k} = 
\sum_{k=0}^{\frac{n-2}{4}} \,G_{4k} - 
\sum_{k=0}^{\frac{n-2}{4}} \,G_{4k+2}\hspace{7.5cm}
\end{equation}
\begin{eqnarray}
&=&-\,A\frac{\alpha^{n+2}-1}{\alpha^2-1} -\,B\frac{\beta^{n+2}-1}{\beta^2+1}
-\,p\frac{d^{n+2}-1}{d^2+1}\hspace{7.5cm} \\ 
&=& \frac{1}{a^2+(b+1)^2}\bigg\{-\Big( G_{n+2}+b^2 G_n \Big) +G_0 +b^2G_{-2}-c\,\frac{d^2+ad-b}{d^2+1}\Big( -d^{n+2}+1 \Big)\bigg\}.
\label{nmod4eq2} 
\end{eqnarray}
For the case $n$ odd and $n\,\mathtt{mod}\,4=1$ (i.e., $n=1,5,9,\dots$) one has:
\begin{equation} % case n mod 4 = 1 --------------------------
\sum_{k=0}^{(n-1)/2} (-1)^{k}\,G_{2k+1} = 
\sum_{k=0}^{\frac{n-1}{4}} \,G_{4k+1} - 
\sum_{k=0}^{\frac{n-5}{4}} \,G_{4k+3}\hspace{6.5cm}
\end{equation}
\begin{eqnarray}
&=&A\frac{\alpha^{n+2}+\alpha}{\alpha^2+1} +B\frac{\beta^{n+2}+\beta}{\beta^2+\beta}
+p\frac{d^{n+2}+d}{d^2+1}\hspace{7.5cm} \\ 
&=& \frac{1}{a^2+(b+1)^2} \bigg\{ G_{n+2}+b^2 G_n +G_1 +b^2G_{-1}-c\,\frac{d^2+ad-b}{d^2+1}
\Big( d^{n+2}+d \Big)\bigg\}.
\label{nmod4eq1} 
\end{eqnarray}
Finally, the case $n$ odd and $n\,\mathtt{mod}\,4=3$ (i.e., $n=7,11,15,\dots$) gives
\begin{equation} %case n mod 4 = 3 -----------------------------
\sum_{k=0}^{(n-1)/2} (-1)^{k}\,G_{2k+1} = 
\sum_{k=0}^{\frac{n-3}{4}} \,G_{4k+1} - 
\sum_{k=0}^{\frac{n-3}{4}} \,G_{4k+3}\hspace{6.5cm}
\end{equation}
\begin{eqnarray}
&=&-\,A\frac{\alpha^{n+2}-\alpha}{\alpha^2+1} -\,B\frac{\beta^{n+2}-\beta}{\beta^2+1}
-\,p\frac{d^{n+2}-d}{d^2+1}\hspace{7.5cm} \\ 
&=& \frac{1}{a^2+(b+1)^2} \bigg\{ -\Big( G_{n+2}+b^2 G_n \Big) +G_1 +b^2G_{-1}-c\,\frac{d^2+ad-b}{d^2+1} \,
\Big( -d^{n+2}+d \Big) \bigg\}.
\label{nmod4eq3} 
\end{eqnarray} 
Drawing a comparison of the results \eqref{nmod4eq0}, \eqref{nmod4eq2}, \eqref{nmod4eq1}, and \eqref{nmod4eq3} allows to unify for all $n$ according to equations \eqref{SumAlt} and \eqref{Gamman}.$\hspace{8.4cm}\square$

\subsubsection{Applying the substitution method.} To start with, only even-indexed terms are considered.  Inserting the transformation \eqref{Hn} into the expression for the series and into equation \eqref{GammaHora} gives
\begin{eqnarray}
\Gamma_{n}^{\mathtt{Hora}}&=&\sum_{k=0}^{\frac{n}{2}} \,(-1)^k H_{2k} =\sum_{k=0}^{\frac{n}{2}} (-1)^k G_{2k}-p\sum_{k=0}^{\frac{n}{2}}(-1)^k d^{2k} \nonumber\\
&=& \Gamma_n-p\sum_{k=0}^{\frac{n}{2}}(-d^2)^k =\,\,\,\Gamma_n-p\frac{(-1)^{n/2}d^{n+2}+1}{d^2+1}\label{627}
\end{eqnarray}
and
\begin{equation}
(a^2+(b+1)^2\,\,\Gamma_{n}^{\mathtt{Hora}} =
 (-1)^{n/2} \Big( H_{n+2}+\,b^2H_{n} \Big) +H_{0}+b^2 H_{-2} \hspace{2cm}\nonumber\\
\end{equation}
\begin{eqnarray}
&=& (-1)^{n/2} \Big( G_{n+2}-pd^{n+2}+b^2G_{n}-pb^2d^n \Big) +G_{0}-p+b^2 G_{-2}-pb^2d^{-2} \nonumber\\
&=&(-1)^{n/2} \Big( G_{n+2}+b^2G_{n}\Big) +G_{0} +G_{-2} 
-p\frac{d^2+b^2}{d^2} \Big( (-1)^{n/2}d^{n+2} +1  \Big).\label{628}
\end{eqnarray}
Solving equation \eqref{627} for $\Gamma_n$, inserting \eqref{628}, replacing $p$ according to \eqref{Gnpart}, and making use of the relation $(d^2+ad-b)(d^2-ad-b)=d^4-(a^2+2b)d^2+b^2$ (as in the proof above), $\Gamma_n$ as given in equation \eqref{Gamman} is easily recovered. To end with, a very similar derivation for the summation of odd-indexed terms can be accomplished, and the proof is complete.

%----------------------------------------------------------------------
%\begin{acknowledgements}
%
%\end{acknowledgements}

\medskip

\noindent MSC2010: 11B37, 11B39, 14H50, 39A06, 65Q30

\end{document}